\newcommand{\R}{\ensuremath{\mathbb{R}}}
\newcommand{\N}{\ensuremath{\mathbb{N}}}
\newcommand{\CF}{\ensuremath{\mathcal{F}}}
\newcommand{\CG}{\ensuremath{\mathcal{G}}}
\newcommand{\CL}{\ensuremath{\mathcal{L}}}
\newcommand{\CH}{\ensuremath{\mathcal{H}}}
\newcommand{\CO}{\ensuremath{\mathcal{O}}}
\newcommand{\CP}{\ensuremath{\mathcal{P}}}
\newcommand{\U}{\ensuremath{\mathcal{U}}}
\newcommand{\V}{\ensuremath{\mathcal{V}}}
\newcommand{\al}{\alpha}
\newcommand{\s}{\Sigma}
\newcommand{\de}{\delta}
\newcommand{\T}{\theta}
\newcommand{\Span}{\mathrm{Span}}
\def\p{\partial}
\def\e{\varepsilon}
\newtheorem {theorem} {Theorem}
\newtheorem {proposition} [theorem]{Proposition}
\newtheorem {lemma}  [theorem]{Lemma}
\newtheorem {mtheorem} {Theorem}
\begin{document}
\renewcommand{\arraystretch}{1.5}
\title[Hilbert's number for PPLVF with an algebraic curve of discontinuity]{On the Hilbert number for piecewise linear vector fields\\ with algebraic discontinuity set}
\author[D.D. Novaes]{Douglas D. Novaes}
\address{Departamento de Matem\'{a}tica - Instituto de Matem\'{a}tica, Estat\'{i}stica e Computa\c{c}\~{a}o Cient\'{i}fica (IMECC) - Universidade
Estadual de Campinas (UNICAMP),  Rua S\'{e}rgio Buarque de Holanda, 651, Cidade Universit\'{a}ria Zeferino Vaz, 13083-859, Campinas, SP,
Brazil} 
\email{ddnovaes@unicamp.br}

\subjclass[2010]{34A36, 37G15, 34C25, 34C07}

\keywords{Filippov systems, piecewise linear differential systems, algebraic discontinuity set, limit cycles, Hilbert number, Melnikov theory}

\maketitle

\begin{abstract}
The second part of the Hilbert's sixteenth problem consists in determining the upper bound $\CH(n)$ for the number of limit cycles that planar polynomial vector fields of degree $n$ can have.  For $n\geq2$, it is still unknown whether $\CH(n)$ is finite or not. The main achievements obtained so far establish lower bounds for $\CH(n)$. Regarding asymptotic behavior, the best result says that $\CH(n)$ grows as fast as $n^2\log(n)$. Better lower bounds for small values of $n$ are known in the research literature. In the recent paper ``Some open problems in low dimensional dynamical systems'' by A. Gasull, Problem 18 proposes another Hilbert's sixteenth type problem, namely improving the lower bounds for $\CL(n)$, $n\in\N$, which is defined as the maximum number of limit cycles that planar piecewise linear differential systems with two zones separated by a branch of an algebraic curve of degree $n$ can have. So far, $\CL(n)\geq [n/2],$ $n\in\N$, is the best known general lower bound.  Again, better lower bounds for small values of $n$ are known in the research literature. 
Here, by using a recently developed second order Melnikov method for nonsmooth systems with nonlinear discontinuity manifold, it is shown that $\CL(n)$ grows as fast as $n^2.$ This will be achieved by providing lower bounds for $\CL(n)$, which improves every previous estimates for $n\geq 4$.
\end{abstract}

\section{Introduction and statement of the main result}

The second part of the Hilbert's sixteenth problem consists in determining a uniform upper bound for the number of limit cycles that planar polynomial vector fields of a given degree $n$ can have. In this context, the so-called {\it Hilbert Number}  $\CH(n)$ denotes this maximum number of limit cycles. Notice that $\CH(1)=0$, because planar linear vector fields do not admit limit cycles. However, for $n\geq2$, it is still unknown whether $\CH(n)$ is finite or not.  The main achievements obtained so far consist in establishing lower bounds for $\CH(n)$. At an early stage of investigation of this problem, several authors established that $\CH(n)$ grows as fast as $n^2$ (see, for instance, Otrokov \cite{O54}, Il'yashenko \cite{I91}, and Basarab-Horwath \& Lloyde \cite{BL88}). 
In \cite{CL95}, Christopher \& Lloyd showed that $\CH(n)$ is at least $\CO(n^2\log n)$ as $n\to\infty$ (see also \cite{HL12}). For $n\in\N$ small, the current best known lower bounds for $\CH(n)$ was provided by Prohens \& Torregrosa in \cite{PT19} (for previous estimates, see the references therein).

Hilbert's sixteenth problem has also been considered in the context of planar piecewise polynomial vector fields (see, for instance, \cite{CNT19,GouTor20} and the references therein).  Providing upper bounds for the maximum number of limit cycles in this context has also been shown to be very challenging, even in the linear case. Again, the main achievements concern lower bounds. In 2010, Han \& Zhang \cite{HanZhang10} constructed examples of piecewise linear vector fields with two zones separated by a straight line having two limit cycles. They conjectured that such systems cannot have more than 2 limit cycles. In 2012, Huan \& Yang  \cite{HuanYang12} provided an example where 3 limit cycles could be numerically observed. In the same year, Llibre \& Ponce \cite{LP12} proved analytically the existence of such numerically observed limit cycles, providing then a negative answer to the conjecture. After that, several works came out with examples presenting 3 limit cycles  (see, for instance, \cite{BuzziEtAl13,cardoso20,FreireEtAl14,FreireEtAl14b,LlibreEtAl15b,NovaesTorregrosa17}).

In 2014, Braga \& Mello \cite{BragaMello14} showed that the number of limit cycles in planar piecewise linear vector fields strongly depends on the ``shape'' of the discontinuity curve. They conjectured the existence of examples having an arbitrary number of limit cycles. In 2015, Novaes \& Ponce \cite{NovaesPonce15} gave a positive answer to this conjecture. Hence, it has become natural to inquire about the maximum number of limit cycles that planar piecewise linear vector fields with two zones separated by a branch of an algebraic curve of degree $n$ can have. In this context, $\CL(n)$ denotes this maximum number of limit cycles.

Recently, in \cite{GTZ20}, Gasull et al. showed that $\CL(n)\geq [n/2],$ for $n\in\N$. For small values of $n,$ the current best lower bounds for $\CL(n)$ have been provided by Andrade et al. in \cite{ACCN21}, where the Melnikov method at arbitrary order was developed for a class of piecewise smooth systems and then used up to order 6 to show that $\CL(2)\geq 4,$ $\CL(3)\geq 8,$ $\CL(n)\geq7,$ for $n\geq 4$ even, and  $\CL(n)\geq9,$ for $n\geq 5$ odd.

Problem 18 of the recent paper \cite{Gasull2021}, ``Some open problems in low dimensional dynamical systems'' by A. Gasull, proposes improving, if possible, the previous lower bounds for $\CL(n)$. Thus, motivated by that, the main result of this paper improves all the previous lower bounds for $\CL(n)$, with $n\geq 4$:
\begin{mtheorem}\label{main}
Let $k\in\N$, $k>1,$ then  $\CL(2k)\geq k^2+2k+1$ and $\CL(2k+1)\geq k^2+2k+3.$
\end{mtheorem}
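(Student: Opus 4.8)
\section*{Proof proposal}

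The plan is to obtain the bounds by a bifurcation argument from a linear center. Start with a planar linear differential system possessing a global center, say $\dot x=-y$, $\dot y=x$, whose period annulus is foliated by the circles $x^2+y^2=r^2$, and perturb it into a one–parameter family $Z_\e$ of piecewise linear Filippov systems. The discontinuity set $\s$ is taken to be a branch of a \emph{fixed} real algebraic curve of degree $n$, chosen so that each circle of (a subannulus of) the period annulus meets $\s$ transversally at a number of points that grows linearly in $n$; a convenient way to guarantee both the degree and the tractability of the intersections is to pick a defining polynomial involving $y$ only through $y^{2}$, so that the intersection with a circle $x^{2}+y^{2}=r^{2}$ is governed by a single polynomial in $x$ of degree about $n$. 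The two linear fields are taken to coincide with the center at $\e=0$ and to differ from it by $O(\e)$ linear terms in each zone. Limit cycles bifurcating from the annulus are then detected by the simple zeros $r_*>0$ of the first–order nonsmooth Melnikov function $M_1(r)$ and, when $M_1\equiv0$, by the simple zeros of the second–order Melnikov function $M_2(r)$ furnished by the recently developed Melnikov method for nonsmooth systems with nonlinear discontinuity manifold. Each simple zero gives one hyperbolic limit cycle of $Z_\e$ for $\e$ small, so the number of simple zeros one can realize is a lower bound for $\CL(n)$.

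The first step is to compute $M_1$. On each arc of a circle contained in a single zone the perturbation is linear, so the Melnikov integral is elementary; its $r$–dependence comes through the homogeneity of the linear terms and through the angular endpoints of the arcs, i.e.\ through the intersection points of the circle of radius $r$ with $\s$, which by the above choice of $\s$ are algebraic functions of $r$. One checks that $M_1(r)$ lies in a finite–dimensional space of explicit functions of $r$ of dimension of order $n/2$ — this is essentially the mechanism responsible for the bound $\CL(n)\ge[n/2]$ of \cite{GTZ20} — and that the coefficients of the two linear perturbations can be used to force $M_1\equiv0$ while leaving enough free parameters for the next order. One then computes $M_2(r)$ from the second–order formula, tracking the correction terms produced by the nonlinearity of $\s$. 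The crucial point is that $M_2$ involves the first–order variational solution along the period annulus and therefore the \emph{products} of the quantities that entered $M_1$ linearly; since the degree–$n$ branch produces of the order of $n$ arcs, these products span a space whose dimension is of order $(n/2)^2$. After an appropriate change of the radial variable, $M_2$ should become proportional to a polynomial (or, if needed, to a function lying in an explicit Extended Complete Chebyshev system) whose degree, respectively dimension, equals $k^{2}+2k+1$ when $n=2k$ and $k^{2}+2k+3$ when $n=2k+1$, the two extra zeros in the odd case being produced by the additional arc structure available for odd $n$.

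Finally, one chooses the remaining free parameters of the piecewise linear perturbation so that this polynomial (or ECT combination) has the maximal number of simple zeros in the range of the radial variable, namely $k^{2}+2k+1$ for $n=2k$ and $k^{2}+2k+3$ for $n=2k+1$; these produce the claimed number of limit cycles for $\e$ small, whence the stated lower bounds, which for $k>1$ improve the previous estimates of \cite{ACCN21}. I expect the main obstacle to be exactly this combination of the last two steps: carrying the computation of $M_2$ far enough to identify the space of functions it generates and to see precisely how the nonlinear manifold $\s$ feeds into the second–order formula, and then proving that this space is rich enough (an ECT–system, or literally all polynomials of the relevant degree) for the $k^{2}+2k+1$ (resp.\ $k^{2}+2k+3$) simple zeros to be realized \emph{simultaneously} by an admissible choice of the perturbation. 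A secondary, largely bookkeeping, issue is to check that the chosen $\s$ is genuinely a branch of an algebraic curve of degree exactly $n$ and that imposing $M_1\equiv0$ does not consume the parameters still required at second order.
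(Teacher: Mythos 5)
Your overall scaffolding (perturb a linear center, kill $M_1$, read the cycles off the simple zeros of $M_2$ via an ECT-system, one hyperbolic cycle per simple zero) matches the paper, but the mechanism you propose for producing $\sim n^2$ independent zeros is different from the paper's and, as stated, does not work. You keep the discontinuity branch $\s$ \emph{fixed}, arrange that each circle meets it in $O(n)$ points, and hope that the quadratic (``product'') terms in the second-order formula span a space of dimension $O(n^2)$. The difficulty is that the only tunable quantities in your construction are the coefficients of the two linear perturbations, a bounded number independent of $n$. Even if $M_2$ is a linear combination of $O(n^2)$ distinct functions of $r$ coming from products of the arc endpoints $\T_j(r)$, its coefficients are quadratic expressions in those $O(1)$ parameters, so the set of realizable functions $M_2$ is a variety of bounded dimension in function space; the ECT argument needs the realizable set to contain a linear space of dimension $N+1$ to force $N$ simple zeros, and you cannot reach $N\sim n^2$ with $O(1)$ parameters. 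The paper obtains the $n^2$ growth from an entirely different source: the discontinuity set itself is perturbed, $y-x^{2k+1}-\e H_k(x,y)=0$, where $H_k$ is a general bivariate polynomial of degree $2k$ (resp.\ $2k+2$) with even-degree monomials. A bivariate polynomial of degree $n$ has $\sim n^2/2$ coefficients, and these coefficients enter $M_2$ \emph{linearly} (each surviving monomial contributes an independent power $u^{2(k+p_i)}$ after restriction to $y=x^{2k+1}$), which is exactly what the ECT argument requires. Note also that in the paper's setup each circle meets the unperturbed branch $y=x^{2k+1}$ in only \emph{two} points, so the ``many arcs'' picture is not the relevant one; and $M_1\equiv0$ holds automatically by parity, it is not imposed by spending parameters.

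Two further points. First, your target counts $k^2+2k+1$ and $k^2+2k+3$ are not reached by zeros of $M_2$ alone in the paper: the ECT-system of $m+3$ functions yields $m+2$ simple zeros, and the last cycle comes from a pseudo-Hopf bifurcation (unfolding a sliding segment at the origin via the extra parameter $\mu$), which your proposal omits. Second, the parity/degree bookkeeping is delicate: in the even-degree case two exponent collisions occur (Lemma \ref{lem:even}), which is why the even bound is $k^2+2k+1$ rather than what a naive coefficient count would give. To repair your argument you would need to promote the coefficients of the degree-$n$ defining polynomial of $\s$ to perturbation parameters — i.e., let $\s$ depend on $\e$ — at which point you are essentially reconstructing the paper's proof.
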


Notice that, for small values of $n$, one has $\CL(5)\geq11$, $\CL(6)\geq16$, $\CL(7)\geq18$, $\CL(8)\geq25$, $\CL(9)\geq27$, $\CL(10)\geq36$, $\CL(11)\geq38,$ and so on. In general, $\CL(n)\geq C\, n^2,$ for every $n\in\N$, where $C$ is a positive constant, which means that $\CL(n)$ grows as fast as $n^2$. 

Theorem \ref{main} will be proven in Section \ref{sec:proof} by performing a second order Melnikov analysis on the the following family of planar piecewise linear vector fields:
\begin{equation*}\label{eq:main}
\left\{ \begin{array}{cc}\left(\!\!\!\begin{array}{c}
y+\e\,y+\e^2\al\\
-x-\e\,\beta\,y+\e^2\,\gamma\, y
\end{array}\!\!\!\right), & y> y-x^{2k+1}-\e H_k(x,y), \vspace{0.3cm}\\ 
\left(\!\!\!\begin{array}{c}
y+\e\,\beta\,x\\
\mu-x
\end{array}\!\!\!\right), & y< y-x^{2k+1}-\e H_k(x,y), 
\end{array}
\right.
\end{equation*}
where $H_k$ is a polynomial function of degree $2k$ or $2k+2$. Notice that the discontinuity curve is an algebraic variety of degree $2k+1$ or $2k+2$ given by $$\Sigma_{\e}^k=\{(x,y)\in\R^2:\, y-x^{2k+1}-\e H_k(x,y)=0\}.$$
Here, Filippov's convention \cite{filippov2013differential} is assumed for its trajectories.
 Section \ref{sec:melnikov} is devoted to discuss the Melnikov method for detecting periodic solutions of piecewise smooth differential systems (PSDS).

\section{Periodic solutions in PSDS with nonlinear discontinuity manifold}\label{sec:melnikov}

The {\it Averaging Method} is a classical tool to approach the problem of bifurcation of periodic solutions for differential systems given in the following standard form
\[
\dot x=\e F(t,x,\e),\quad (t,x,\e)\in\R\times \overline{D}\times[-\e_0,\e_0],
\] 
where $D\subset\R^n$ is an open bounded set, $\e$ is assumed to be a small parameter, and $F$ is $T$-periodic in the variable $t$ (see \cite{BuiLli2004,CLN17,LNT2014,SVM}). This method has recently been developed for several classes of PSDS (see, for instance, \cite{ILN2017,LMN15,LliNovRod2017,LNT15}). In these previous studies, when dealing with higher order perturbations, the considered classes of PSDS assume strong conditions on the discontinuity set, for instance, that it is a hyper-surface. Llibre et al. observed in \cite{LMN15} that the first order averaged function,
\[
f_1(x)=\int_0^T F(t,x,0)dt,
\]
always controls the bifurcation of isolated periodic solutions, however higher order averaged functions do not. Recently, the bifurcation functions for detecting periodic solutions of a quite general class of PSDS with nonlinear discontinuity manifold was developed up to order $2$ by Bastos et al. in \cite{BBLN19} and, afterwards, at any order by Andrade et al. in \cite{ACCN21}. Those function are also called Poincar\'{e}-Pontryagin-Melnikov functions or just Melnikov functions. This section is devoted to discuss the second-order Melnikov function.

Let $D\subset\R^d$ be an open bounded subset and $\mathbb{S}^1=\R/T\mathbb{Z},$ for some period $T>0$. Let $\T_j:D\to \mathbb{S}^1$, $j\in\{1,\ldots,N\}$, be $C^{1}$ functions such that $\T_0(x)\equiv0<\T_1(x)<\cdots<\T_N(x)<T\equiv\T_{N+1}(x)$, for all $x\in \overline{D}$. Under the assumptions above, consider the following PSDS defined on $\mathbb{S}^1\times \overline{D}$:
\begin{equation}\label{general-system2} 
\dot{x} = \e F_1(t,x)+\e^2 F_2(t,x)+\e^{3}R(t,x,\e),
\end{equation}
where, for $i\in\{1,2\}$,
\[
F_i(t,x)=\left\{
\begin{array}{ll}
F_i^0(t,x), & 0<t<\T_1(x), \\
F_i^1(t,x), & \T_1(x)<t<\T_2(x), \\
\vdots & \\
F_i^N(t,x), & \T_N(x)<t<T, 
\end{array}
\right.
\]
and
\[
R(t,x,\e)=\left\{
\begin{array}{ll}
R^0(t,x,\e), & 0<t<\T_1(x), \\
R^1(t,x,\e), & \T_1(x)<t<\T_2(x), \\
\vdots & \\
R^N(t,x,\e), & \T_N(x)<t<T, 
\end{array}
\right.
\]
with $F_i^j:\mathbb{S}^1\times \overline{D}\rightarrow\R^d$ and $R^j:\mathbb{S}^1\times \overline{D}\times (-\e_0,\e_0)\rightarrow\R^d$, for $i\in\{1,2\}$ and $j\in\{1,\ldots,N\}$, being $C^{1}$ functions. In this case, the discontinuity manifold is given by $$\s=\{(\T_i(x),x);\ x\in \overline{D},\ i\in\{0,1,\ldots,N\} \}.$$ 

The first and second-order Melnikov functions, $M_1,M_2:D\to\R^d,$ for PSDS \eqref{general-system2} are given as follows:
\begin{equation}\label{mel1}
M_1(x)=f_1(x)\quad \text{and}\quad M_2(x)= f_2(x) + f_2^*(x),
\end{equation}
where 
\begin{equation}\label{prom}
\begin{array}{l}
\displaystyle f_1(x)=\int_{0}^{T}F_1(s,x)ds,\vspace{0.3cm}\\
\displaystyle  f_2(x)= \int_{0}^{T}\bigg[D_xF_1(s,x)
\int_{0}^{s}F_1(t,x)dt+F_2(s,x)\bigg]ds,
\end{array}
\end{equation}
and
\[
f_2^*(x)  = \sum_{j=1}^M \Big(F_1^{j-1}(\T_j(x),x) -
F_1^{j}(\T_j(x),x)\Big)
D_x\T_j(x)\int_0^{\T_j(x)}\hspace{-0.3cm}F_1(s,x)ds.
\]
Notice that $M_1$ coincides with the first-order averaged function and $M_2$ is given by the second-order averaged function $f_2(x)$ plus an increment $f_2^*(x)$, which depends on the ``jump of discontinuity'' and on the ``geometry'' of the discontinuity manifold.

\begin{theorem}[{\cite[Theorem B]{BBLN19}}] \label{melnikov} The following statements hold.
\begin{itemize} 
\item[{\bf i.}] {\bf (First Order)}
Assume that $x^*\in D$ satisfies $M_1(x^*)=0$ and
$\det\left(DM_1(x^*)\right)\neq0.$ Then, for $|\e|\neq0$
sufficiently small, there exists a unique $T$-periodic solution
$x(t,\e)$ of system \eqref{general-system2} such that $x(0,\e)\rightarrow
x^*$ as $\e\rightarrow0.$

\smallskip

\item[{\bf ii.}] {\bf (Second Order)}
Assume that $M_1=0$ and that $x^*\in D$ satifies
$M_2(x^*)=0$ and $\det\left(DM_2(x^*)\right)\neq0.$
Then, for $|\e|\neq0$ sufficiently small, there exists a unique
$T$-periodic solution $x(t,\e)$ of system \eqref{general-system2} such
that $x(0,\e)\rightarrow x^*$ as
$\e\rightarrow0.$
\end{itemize}
\end{theorem}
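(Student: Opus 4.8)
The plan is to run the classical averaging/Melnikov scheme, adapted so as to keep track of the crossings of the $x$-dependent discontinuity manifold $\s$. The unperturbed vector field of \eqref{general-system2} vanishes, so every one of its solutions is constant, hence $T$-periodic; and since \eqref{general-system2} lives on $\mathbb{S}^1\times\overline D$, a solution $x(\cdot;z,\e)$ with $x(0;z,\e)=z$ is $T$-periodic if and only if the displacement $\Delta(z,\e):=x(T;z,\e)-z$ vanishes. The core of the argument will be to show that $\Delta$ is well defined and $C^1$ in $(z,\e)$ with
\[
\Delta(z,\e)=\e\,M_1(z)+\e^2\,M_2(z)+\CO(\e^3),
\]
$M_1,M_2$ as in \eqref{mel1}--\eqref{prom}, and then to invoke the implicit function theorem.

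\textbf{Step 1 (solutions are well defined and cross $\s$ transversally).} First I would fix a compact neighborhood $K\subset D$ of the region of interest; since the right-hand side of \eqref{general-system2} is $\CO(\e)$, for $|\e|$ small the solution $x(t;z,\e)$ with $z\in K$ exists on $[0,T]$ and remains within $\CO(\e)$ of $z$. Along it, $\tfrac{d}{dt}\big(t-\T_j(x(t;z,\e))\big)=1-D\T_j(x(t;z,\e))\,\dot x=1+\CO(\e)\neq0$, so the trajectory meets each surface $\{t=\T_j(x)\}$, $j=1,\dots,N$, transversally and exactly once, near $t=\T_j(z)$; in particular these are Filippov crossing points (no sliding occurs), the solution is uniquely defined on $[0,T]$, and, precisely because the crossings are transversal, it depends in a $C^1$ fashion on $(z,\e)$.

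\textbf{Step 2 (expansion through the crossings).} Next I would write $x(t;z,\e)=z+\e\,y_1(t,z)+\e^2\,y_2(t,z)+\CO(\e^3)$ and denote by $\tau_j(z,\e)$ the $j$-th crossing time, that is, the solution $\tau$ near $\T_j(z)$ of the equation $\tau=\T_j(x(\tau;z,\e))$; the implicit function theorem together with the transversality of Step 1 gives $\tau_j(z,\e)=\T_j(z)+\e\,D\T_j(z)\,y_1(\T_j(z),z)+\CO(\e^2)$. Then I would integrate \eqref{general-system2} successively on the subintervals $[0,\tau_1],[\tau_1,\tau_2],\dots,[\tau_N,T]$ (on which $F_i$ equals $F_i^0,F_i^1,\dots,F_i^N$ respectively) and substitute the expansions of $x$ and of the $\tau_j$. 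At first order this yields $y_1(t,z)=\int_0^t F_1(s,z)\,ds$, so $\Delta(z,\e)=\e\,f_1(z)+\CO(\e^2)$. At second order two contributions arise: an ``interior'' one, $\int_0^T\!\big[D_xF_1(s,z)\int_0^s F_1(t,z)\,dt+F_2(s,z)\big]ds=f_2(z)$, exactly as in smooth second-order averaging; and a ``boundary'' one, coming from the $\CO(\e)$ displacement of each switching time: shifting $\tau_j$ by $\e\,D\T_j(z)\,y_1(\T_j(z),z)$ moves the upper endpoint of the integral of $F_1^{j-1}$ over $[\tau_{j-1},\tau_j]$ and the lower endpoint of the integral of $F_1^{j}$ over $[\tau_j,\tau_{j+1}]$, producing at order $\e^2$ the term $\big(F_1^{j-1}(\T_j(z),z)-F_1^{j}(\T_j(z),z)\big)D\T_j(z)\int_0^{\T_j(z)}F_1(s,z)\,ds$; summing over $j=1,\dots,N$ these reorganize precisely into $f_2^*(z)$. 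Thus $\Delta(z,\e)=\e\,M_1(z)+\e^2\,M_2(z)+\CO(\e^3)$, the $\CO(\e^3)$ remainder being $C^1$ in $(z,\e)$ thanks to the regularity hypotheses on the $F_i^j$ and $R^j$.

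\textbf{Step 3 (conclusion via the implicit function theorem).} For statement \textbf{i}, I would set $g(z,\e)=\Delta(z,\e)/\e$ for $\e\neq0$ and $g(z,0)=M_1(z)$; this map is $C^1$, $g(x^*,0)=M_1(x^*)=0$, and $D_zg(x^*,0)=DM_1(x^*)$ is invertible, so the implicit function theorem produces a unique $C^1$ branch $z(\e)$ with $z(0)=x^*$ and $g(z(\e),\e)=0$, and then $x(t,\e):=x(t;z(\e),\e)$ is the sought $T$-periodic solution. For statement \textbf{ii}, $M_1\equiv0$ on $D$ forces $\Delta(z,\e)=\e^2 M_2(z)+\CO(\e^3)$, and the same argument applied to $h(z,\e)=\Delta(z,\e)/\e^2$, $h(z,0)=M_2(z)$, together with the hypotheses $M_2(x^*)=0$ and $\det DM_2(x^*)\neq0$, concludes. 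I expect the main obstacle to be Step 2 --- obtaining an $\e$-expansion of $x(\cdot;z,\e)$ valid \emph{across} $\s$ with a $C^1$-controlled remainder --- which rests on the $C^1$-dependence of the crossing times $\tau_j(z,\e)$ (from the implicit function theorem and the transversality of Step 1) and on concatenating the integral pieces carefully enough that the endpoint terms reorganize into $f_2^*$; everything else is the standard averaging-plus-implicit-function-theorem machinery.
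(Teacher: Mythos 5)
Your proposal is correct and follows essentially the same route as the source of this statement: the paper itself quotes Theorem \ref{melnikov} from \cite[Theorem B]{BBLN19} without proof, and the argument there is precisely the one you outline --- transversality of the trajectories with the switching surfaces $\{t=\T_j(x)\}$ for small $\e$, an $\e$-expansion of the displacement map in which the $\CO(\e)$ shift of the crossing times produces the increment $f_2^*$ on top of the smooth averaged terms $f_1,f_2$, and the implicit function theorem applied to $\Delta/\e$ or $\Delta/\e^2$. The only point deserving a remark is that, under the stated $C^1$ hypotheses, $\Delta(z,\e)/\e^k$ is continuous in $(z,\e)$ and $C^1$ in $z$ with jointly continuous $z$-derivative rather than fully $C^1$, which is exactly what the parametric implicit function theorem needs; this is a standard technicality, not a gap.
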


\section{Proof of the main result}\label{sec:proof}

This section is devoted to the proof of Theorem \ref{main}. It starts by presenting two important concepts which will be used in its proof, namely, the Extended Complete Chebyshev systems (ECT-systems) and the Pseudo-Hopf bifurcation. Section \ref{prelim} provides a useful preliminary result. Theorem \ref{main} is then proven in Section \ref{proofodd} for $n$ odd, and in Section \ref{proofeven} for $n$ even.

\subsection{ECT-systems}

Let $\CF=\left[f_{0}, \ldots, f_{n}\right]$ be an ordered set of smooth functions defined on an interval $I$ and let $\Span(\CF)$ be the set of all linear combinations of elements of $\CF$.
The set $\CF$ is said to be an {\it Extended Complete Chebyshev} system or an ECT-system on $I$ if, and only if, for each $0\leq {\ell}\leq n,$ $W\big(f_0(x),\ldots,f_{\ell}(x)\big)\neq 0$ for every $x\in I$ (see \cite{KASTU1966}). Recall that $W\big(f_0(x),\ldots,f_{\ell}(x)\big)$  denotes the Wronskian of the ordered set $[f_0, \ldots, f_{\ell}]$, that is
\begin{equation*}\label{wrons}
W(f_0(x),\ldots, f_{\ell}(x))(x)=\det\big(M(f_0(x), \ldots, f_{\ell}(x))\big),
\end{equation*}
where
\[
M(f_0, \ldots, f_{\ell})(x)=\left( \begin{array}{ccc}
f_0(x)& \ldots& f_{\ell}(x)\\
f'_0(x)& \ldots & f'_{\ell}(x)\\
\vdots& & \vdots\\
f_0^{({\ell})}(x) &&  f_{\ell}^{({\ell})}(x) 
\end{array}\right).
\]
In particular, if $\CF$ is an ECT-system on $I$, then for each configuration of $m\leq n$ zeros on $I$, taking into account their multiplicity, there exists a function in $\Span(\CF)$ realizing this configuration (see, for instance, \cite[Theorem 1.3]{NovaesTorregrosa17}). This is the main property of ECT-systems that is going to be used in the proof of Theorem \ref{main}.

As an example, one can see that the set $\{x^{a_1},x^{a_2},\ldots, x^{a_n}\}$ is an ECT-system on $I=(0,+\infty)$, provided that $a_i\neq a_j$ for $i,j\in\{1,\ldots,n\}$ such that $i\neq j$. Indeed, for $\ell\in\{1,\ldots,n\}$,
\begin{equation}\label{wromon}
\begin{aligned}
W(x^{a_1},\ldots,x^{a_{\ell}})= \left(\prod_{i=1}^{\ell}\prod_{j=i+1}^{\ell}(a_j-a_i) \right) x^{\rho_{\ell}}\neq0, \, x\in I\, \text{ where }\,\rho_{\ell}=\sum_{i=1}^{\ell} a_i-\dfrac{\ell(\ell-1)}{2}.
\end{aligned}
\end{equation}
The Wronskian above can be computed by using the following  properties of Wronskians (see \cite{hartman69,swia71}):
Let $g:I\to\R$ be a smooth function such that $g(x)\neq0$ for $x\in I$, then:
\begin{itemize}
\item
$W\big(g(x)\,f_0(x),\ldots,g(x)\,f_n(x)\big)=g(x)^{n+1}W\big(f_0(x),\ldots,f_n(x)\big)$ for $x\in I.$

\medskip

\item $W\big(g(x),f_0(x),\ldots,f_n(x)\big)=g(x)^{n+2}W\left(\Big(\dfrac{f_0(x)}{g(x)}\Big)',\ldots,\Big(\dfrac{f_n(x)}{g(x)}\Big)'\right)$ for $x\in I.$
\end{itemize}

\subsection{Pseudo-Hopf bifurcation}
For $\mu\in\R$, consider the following family of piecewise smooth vector fields
\[
Z_{\mu}(x,y)=\begin{cases}
X_{\mu}(x,y)& h(x,y)>0,\\
Y_{\mu}(x,y)& h(x,y)<0,
\end{cases}
\] 
where $h:\R^2\to\R$ is a smooth function having $0$ as a regular value such that $h(0,0)=0$.
Consider the following qualitative conditions on $Z_{\mu}$: 
\begin{itemize}
\item[{\bf H1.}] for $\mu=0$, the origin is either an asymptotically stable or unstable monodromic singularity of $Z_0$;
\item[{\bf H2.}] for $\mu\neq0$, $Z_{\mu}$ has a sliding segment containing the origin which changes stability as $\mu$ changes sign.
\end{itemize}
Notice that, in {\bf H1}, the singularity can be either of focus-focus, focus-tangential, or tangential-tangential type and the stability property can be ensured by means of the Lyapunov coefficients (see, for instance, \cite{gassulcoll,novaessilva21}). Condition ${\bf H2}$ is equivalent to impose that, for each $\mu\neq0$ small, there exists a neighbourhood $U_{\mu}\subset\R^2$ of the origin such that $X_{\mu} h(p)Y_{\mu}h(p)<0$ for every $p\in U_{\mu}\cap\Sigma$ and that the function $\mu\mapsto X_{\mu} h(p)$ changes sign as $\mu$ changes its sign.

Assuming conditions above, the piecewise smooth vector fields $Z_{\mu}$ undergoes a pseudo-Hopf bifurcation at $\mu=0$ (eventually degenerated), giving birth to a sliding segment and a limit cycle, which exists either for $\mu>0$ or $\mu<0$ (see Figure \ref{fig:pseudohopf}).
\begin{figure}[H]
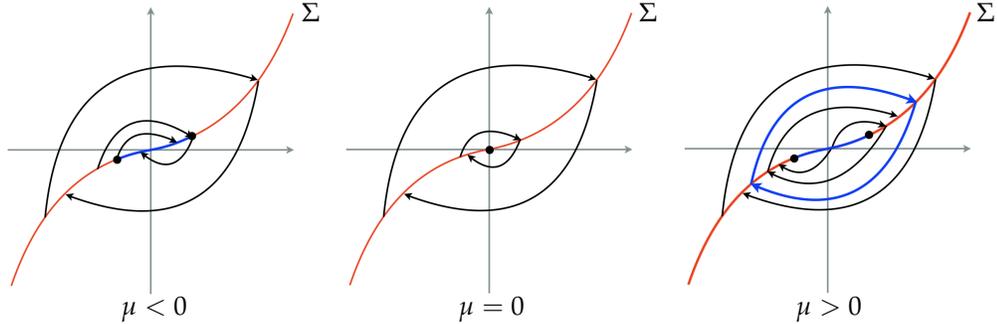

    \begin{center}
       \begin{overpic}[width=13cm]{Fig1.png}
       \put(30.5,28){$\Sigma$}
         \put(65,28){$\Sigma$}
           \put(99.5,28){$\Sigma$}
           \put(46.5,-2){$\mu=0$}
           \put(12,-2){$\mu<0$}
           \put(81,-2){$\mu>0$}
        \end{overpic}
        
        \vspace{0.5cm}
        
        \caption{Representation of a pseudo-Hopf bifurcation in a piecewise smooth vector field $Z_\mu$ which has a asymptotically stable monodromic singularity at the origin for $\mu=0$; a stable sliding segment containing the origin for $\mu<0$; and a unstable sliding segment for $\mu>0$. In this case, a asymptotically stable limit cycle appears for $\mu>0$.}\label{fig:pseudohopf}
    \end{center}
\end{figure}

The pseudo-Hopf bifurcation was considered by Filippov in his book \cite{filippov2013differential} (see item b of page 241) and has been explored in \cite{castillo17}. When working with cyclicity problems, this bifurcation provides a usefull mechanism to improve the number of limit cycles (see, for instance, \cite{CNT19,GouTor20,novaessilva21}).

\subsection{A preliminary result}\label{prelim}
Let $m$ be a positive integer. Consider an increasing finite sequence of positive integers $p_i,$ for $i\in\{1,\ldots,m\},$ and a sequence of real numbers $c_i,$ for $i\in\{1,\ldots,m\}.$ Let $h_{m}(x;c)$ be the following polynomial function on $x$ of degree $2p_m$:
\[
h_m(x;c)=c_1 x^{2 p_1}+c_2 x^{2p_2}+\cdots+c_m x^{2p_m},
\]
where $c=(c_1,\ldots,c_m)$.
Now, consider the following family of  planar piecewise linear vector fields: 
\begin{equation}\label{general-system1}
Z_{\e}(x,y;\Lambda,\mu)=\left\{ \begin{array}{lc}X_{\e}(x,y;\Lambda,\mu), & y>x^{2k+1}+\e\, h_m(x;c)+\e^2\CO(|c|^2)+\CO(\e^3), \\ 
Y_{\e}(x,y;\Lambda,\mu), & y<x^{2k+1}+\e\, h_m(x;c)+\e^2\CO(|c|^2)+\CO(\e^3), 
\end{array}
\right.
\end{equation}
where
\begin{equation}\label{XY}
X_{\e}(x,y;\Lambda,\mu)=
\left(\!\!\!\begin{array}{c}
y+\e\,y+\e^2\al+\CO(\e^3)\\
-x-\e\,\beta\,y+\e^2\,\gamma\, y+\CO(\e^3)
\end{array}\!\!\!\right),\quad
Y_{\e}(x,y;\Lambda,\mu)=\left(\!\!\!\begin{array}{c}
y+\e\,\beta\,x +\CO(\e^3)\\
\mu-x+\CO(\e^3)
\end{array}\!\!\!\right),
\end{equation}
 $\Lambda=(\al,\beta,\gamma,c_1,\ldots,c_m)\in \R^{3+m},$ and $\mu\in\R$. Assume that 
\begin{equation}\label{liecond}
X_{\e}(0,0;\Lambda,0)=(\e^2\al,0)\,\,\text{ and }\,\,Y_{\e}(0,0;\Lambda,0)=(0,0),
\end{equation}
for every $\Lambda$ and $\e$.

\begin{proposition}\label{prop:funda}
Let $U\subset D,$ $\U\subset\R^{3+m},$ and $I\subset\R$ be neighbourhoods of $(0,0)\in\R^2,$ $(0,\ldots,0)\in\R^{3+m}$, and $0\in\R$, respectively. Then, there exists $(\Lambda_0,\mu_0)\in\U\times I$ such that, for $|\e|\neq0$ sufficiently small, the vector field $Z_{\e}(x,y;\Lambda_0,\mu_0)$ has $m+3$  (crossing) limit cycles inside $U$.
\end{proposition}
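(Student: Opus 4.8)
The plan is to combine the second-order Melnikov analysis of Theorem \ref{melnikov} with the flexibility of ECT-systems and a final pseudo-Hopf bifurcation, so that $m+2$ limit cycles come from zeros of the second-order Melnikov function and one extra limit cycle is produced by a sliding/pseudo-Hopf mechanism. First I would put the family \eqref{general-system1} in the standard form \eqref{general-system2} required by the averaging/Melnikov machinery. To do this, one writes the unperturbed system ($\e=0$): in the upper zone $X_0$ is the linear center $\dot x = y$, $\dot y = -x$, and in the lower zone $Y_0$ is $\dot x = y$, $\dot y = -x$ as well (after using \eqref{liecond}), so that for $\e=0$ the plane is foliated by circles centered at the origin; the discontinuity curve $y=x^{2k+1}$ passes through the origin and, near the origin, crossing occurs on both sides. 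I would then pass to polar-like coordinates (or the standard ``$r$ as the variable, angle as time'' trick) to rewrite $Z_\e$ as a scalar periodic nonsmooth equation $\dot r = \e F_1(\T,r) + \e^2 F_2(\T,r) + \e^3 R$, identifying the switching angles $\T_1(r),\T_2(r)$ where the circle of radius $r$ meets $\{y=x^{2k+1}+\e h_m+\cdots\}$; note that because the curve is $\e$-close to $y=x^{2k+1}$, the $\T_j$ depend on $r$ but the $\e$-dependence of the switching manifold is precisely the kind of nonlinear discontinuity manifold the second-order formula in \eqref{mel1}--\eqref{prom} is designed to handle.

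Next I would compute $M_1(r)=f_1(r)$. The perturbation parameters $(\al,\be,\ga,c_1,\ldots,c_m)$ should be arranged — as the $\e^2\al$ in the first component and the structure of \eqref{XY} suggest — so that $M_1\equiv 0$ identically (the first-order terms $\e\,y$ and $\e\,\be x$, $\e\,\be y$ are chosen to cancel on average over a full revolution, and $h_m$ enters only at order $\e$ in the switching manifold, hence only through $f_2^*$ at second order). With $M_1\equiv 0$, Theorem \ref{melnikov}(ii) applies and the bifurcation of periodic orbits is governed by $M_2(r)=f_2(r)+f_2^*(r)$. I would then explicitly evaluate $f_2$ (the smooth part, contributed by $\al,\be,\ga$) and $f_2^*$ (the jump/geometry part, contributed by the $c_i$'s via the switching angles $\T_j(r)$ determined by $x^{2k+1}=$ circle). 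The key computation is that, after expanding the integrals, $M_2(r)$ becomes a linear combination of the form $M_2(r) = a_0\, r^{\alpha_0} + a_1\, r^{\alpha_1}+\cdots + a_{m+1}\, r^{\alpha_{m+1}}$ (plus possibly logarithmic or polynomial factors), with the exponents $\alpha_j$ pairwise distinct — the $r^{2p_i}$-type contributions from $c_i$ combined with the monomials coming from $\al,\ga$, suitably normalized by a nonvanishing common factor. By the Wronskian identity \eqref{wromon} and the Wronskian product/quotient rules quoted after it, the relevant set of functions $\{r^{\alpha_0},\ldots,r^{\alpha_{m+1}}\}$ is an ECT-system on $(0,\delta)$, so there is a choice of $\Lambda_0\in\U$ making $M_2$ have $m+1$ simple positive zeros arbitrarily close to $0$; Theorem \ref{melnikov}(ii) then yields $m+1$ crossing limit cycles of $Z_\e(\cdot;\Lambda_0,0)$ inside $U$.

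To get the last two limit cycles I would, keeping $\Lambda_0$ fixed, unfold with the parameter $\mu$: the term $\mu$ in $Y_\e$ together with condition \eqref{liecond} is exactly set up so that, for $\mu=0$, the origin is a monodromic singularity whose stability is controlled by the Lyapunov coefficients (condition \textbf{H1}), while for $\mu\neq 0$ a sliding segment through the origin appears and changes stability with $\sgn\mu$ (condition \textbf{H2}). Hence $Z_\e(\cdot;\Lambda_0,\mu)$ undergoes a pseudo-Hopf bifurcation at $\mu=0$, producing one additional small limit cycle near the origin for $\mu=\mu_0$ of the appropriate sign, disjoint from and not destroying the $m+1$ Melnikov cycles (which sit at definite radii bounded away from $0$, and persist for $\mu_0$ small by the implicit function theorem / structural stability of simple zeros). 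Counting $\al$ as providing one more degree of freedom — the $\e^2\al$ translation breaks the remaining symmetry and contributes the $(m+2)$-nd zero before the pseudo-Hopf step — gives $m+3$ limit cycles in total.

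The main obstacle I expect is the explicit evaluation of $M_2(r)=f_2(r)+f_2^*(r)$ and, in particular, verifying that the resulting family of functions is genuinely an ECT-system with $m+2$ independent generators on a one-sided neighbourhood of $0$: one must check that the exponents produced by $\al,\ga$ do not collide with the even exponents $2p_1,\ldots,2p_m$ coming from $h_m$ (otherwise one loses a Chebyshev degree), handle any logarithmic terms that may appear from the angular integrals $\int_0^{\T_j(r)}F_1\,ds$, and control the $r$-dependence of the switching angles $\T_j(r)$ — which are defined implicitly by $x^{2k+1}=\,$(circle of radius $r$) and are only smooth for $r>0$. Keeping careful track of the sign of $F_1^{j-1}-F_1^{j}$ across $\s$ in $f_2^*$, and of the orientation of the Filippov trajectories near the origin, is the delicate bookkeeping that makes the argument work; once $M_2$ is in the monomial form above, the ECT conclusion and the pseudo-Hopf increment are routine given \eqref{wromon} and the results recalled in the preceding subsections.
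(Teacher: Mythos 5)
Your overall route (reduce to the standard form of Section \ref{sec:melnikov}, pass to polar coordinates, show $M_1\equiv 0$, produce simple zeros of $M_2$ via an ECT-system, and add one final cycle by a pseudo-Hopf bifurcation) is the same as the paper's, but there are two genuine gaps. The first is the count. As you set it up, $M_2$ lies in the span of $m+2$ generators (the $m$ contributions of the $c_i$ plus what you describe as ``the monomials coming from $\al,\ga$''), so the ECT argument yields only $m+1$ simple zeros, hence $m+1$ crossing limit cycles, and the pseudo-Hopf step adds exactly one more, giving $m+2$. Your closing claim that ``$\al$ provides one more degree of freedom'' and ``contributes the $(m+2)$-nd zero'' is an assertion, not an argument, and it is exactly the point that must be proved. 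In the paper the three linear parameters $\al,\be,\ga$ contribute \emph{three} linearly independent functions $f_1,f_2,f_3$ of the variable $u$ (with $r=\sqrt{u^2+u^{4k+2}}$), and together with the $g_i(u)=8u^{2(k+p_i)}$ they span the same space as an $(m+3)$-element system $\CG$; moreover $f_2$ and $f_3$ are \emph{not} monomials, so \eqref{wromon} alone does not settle the ECT property --- one has to pass to the combinations $h_1,h_2,h_3$ and compute the leading order of the Wronskians $W_1,W_2,W_3$ explicitly. Only then does one obtain $m+2$ simple zeros, hence $m+2$ hyperbolic limit cycles, and $m+3$ after the pseudo-Hopf unfolding.

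The second gap concerns the applicability of Theorem \ref{melnikov}. The standard form requires the switching times $\T_j$ to depend on $x$ only, not on $\e$; your statement that the $\e$-dependence of the switching manifold ``is precisely the kind of nonlinear discontinuity manifold the second-order formula is designed to handle'' is incorrect, as is the related claim that $h_m$ enters only through $f_2^*$. The paper first performs the change of variables $(u,v)=\big(x,\,y-\e\,h_m(x;c)+\cdots\big)$ so that the discontinuity curve becomes the fixed set $v=u^{2k+1}$; this pushes $h_m$ into the vector field at order $\e$, where it contributes to $F_1^\pm$ and hence to both $f_2$ and $f_2^*$. Without that straightening step the Melnikov formulas you invoke do not apply as stated. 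Two further omissions worth noting: the remainder in $M_2$ is only $\CO(|\Lambda|^2)$-close to the linear combination you analyze, so one must rescale $\Lambda=\de\Lambda^*$ and use the implicit function theorem to keep the $m+2$ simple zeros; and one must actually verify (as the paper does with a Lie-derivative and implicit-function argument at the start of the proof) that trajectories cross $\Sigma_\e$ transversally away from the origin, so that the detected periodic orbits are genuine crossing limit cycles, rather than leaving this as ``delicate bookkeeping''.
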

\begin{proof}
First, take $\mu=0$. Notice that, given a compact neighbourhood $\U\subset\R^{3+m}$ of $(0,\ldots,0)\in\R^{3+m},$ there exist $\e_0>0$ and a neighbourhood $U_0\subset U\subset \R^2$ of $(0,0)$ such that the trajectories of $Z_{\e}(\cdot;\Lambda,0)\big|_{U_0}$, apart from the origin, cross transversally the discontinuity curve, for every $\Lambda\in\U$ and $\e\in[-\e_0,\e_0]$. Indeed, denote $$f_{\e,\Lambda}(x)=x^{2k+1}+\e\, h_m(x;c)+\e^2\CO(|c|^2)+\CO(\e^3)$$ such that $$(x,y)\in\Sigma_{\e}\,\,\Leftrightarrow\,\,y=f_{\e,\Lambda}(x).$$ Define
\[
\begin{aligned}
&\CL_X(x;\e,\Lambda)=\big\langle\big(-f'_{\e,\Lambda}(x),1\big),X_{\e}(x,f_{\e,\Lambda}(x);\Lambda,0)\big\rangle\,\,\text{ and }\\
&\CL_Y(x;\e,\Lambda)=\big\langle\big(-f'_{\e,\Lambda}(x),1\big),Y_{\e}(x,f_{\e,\Lambda}(x);\Lambda,0)\big\rangle.
\end{aligned}
\]
Notice that
\[
\CL_X(0;0,\Lambda)=\CL_Y(0;0,\Lambda)=0\,\,\text{ and }\,\, \dfrac{\p \CL_X}{\p x}(0;0,\Lambda)=\dfrac{\p \CL_Y}{\p x}(0;0,\Lambda)=-1\,\,\text{ for every }\,\, \Lambda\in\U.
\]
Thus,
by the implicit function theorem and taking into account the compactness of $\U$, there exist $\e_0>0$ and  unique functions $\xi_X,\xi_Y:[-\e_0,\e_0]\times\U \to \R$ such that 
\[
\xi_X(0)=\xi_Y(0)=0\,\text{  and  }\,
\CL_X(\xi_X(\e,\Lambda);\e,\Lambda)=\CL_Y(\xi_Y(\e,\Lambda);\e,\Lambda)=0,
\] for every $(\e,\Lambda)\in [-\e_0,\e_0]\times\U $.
Now, taking \eqref{liecond} into account, one can see that $\CL_X(0;\e,\Lambda)=\CL_Y(0;\e,\Lambda)=0$ for every $(\e,\Lambda)\in[-\e_0,\e_0]\times\U$, which, from the uniqueness of $\xi_X$ and $\xi_Y$, implies that $\xi_X=\xi_Y=0$. This means that there exists an open neighbourhood $J$ of $x=0$ such that $\CL_X(x;\e,\Lambda)<0$ and $\CL_Y(x;\e,\Lambda)<0$ for every $(x,\e,\Lambda)\in (J\setminus\{0\})\times [-\e_0,\e_0]\times\U$. Hence, the claim follows by
denoting $U_0=\{(x,x^{2k+1}:\,x\in J\}$
and taking $\e_0>0$ and $J$ smaller if necessary.

Now, in order to remove the dependence on $\e$ of the discontinuity curve, we perform the change of variables $(u,v)=\big(x,y-\e\, h_m(x;c)+\e^2\CO(|c|^2)+\CO(\e^3)\big)$. In these new variables, the vector field $Z_{\e}(x,y;\Lambda,0)$ writes
\begin{equation}\label{general-system2}
\widetilde Z_{\e}(u,v;\Lambda)=\left\{ \begin{array}{lc}\widetilde X_{\e}(u,v;\Lambda), & v>u^{2k+1}, \\ 
\widetilde Y_{\e}(x,y;\Lambda), & v<u^{2k+1}, 
\end{array}
\right.
\end{equation}
where
\[
\widetilde X_{\e}(u,v)=
\left(\begin{array}{c}
v+\e\big(v+h_m(u;c)\big)+\e^2\big(\al+h_m(u;c)+\CO(|\Lambda|^2)\big)+\CO(\e^3)\\
-u-\e\,v\,\big(\beta+h_m'(u;c)\big)+\e^2\,y\,\big(\gamma-h_m'(u;c)+\CO(|\Lambda|^2)\big)+\CO(\e^3)
\end{array}\right),
\]
and
\[
\widetilde Y_{\e}(u,v)=\left(\begin{array}{c}
v+\e\,\big(\beta\,u+h_m(u;c))+\e^2\CO(|\Lambda|^2)+\CO(\e^3)\\
-u-\e\,v\,h_m'(u;c)+\e^2\CO(|\Lambda|^2)+\CO(\e^3)
\end{array}\right).
\]
In addition, the set $U_0$ is transformed into another neighborhood  $\widetilde U_0$ of the origin, which may depend on $\Lambda$ and $\e$. Nevertheless, one can see that there exists $r_0>0,$ not depending on $\Lambda$ and $\e,$ such that $B_0=\{(u,v):\,|(u,v)|<r_0\}\subset\widetilde U_0$ for every $\Lambda\in\U$ and $\e\in[-\e_0,\e_0]$. Notice that the trajectories of $\widetilde Z_{\e}(\cdot;\Lambda)\big|_{B_0}$, apart from the origin, cross transversally the discontinuity curve $\Sigma=\{(u,v)\in B_0:\,v=u^{2k+1}\}$, for every $\Lambda\in\U$ and $\e\in[-\e_0,\e_0]$.

Now, performing the polar change of variables, $(u,v)=(r\cos\T,r\sin\T),$ and taking $\T$ as the new time variable, the piecewise smooth vector field \eqref{general-system2} is transformed into the following PSDS:
\begin{equation}\label{eq:pol}
\frac{dr}{d\T} = \begin{cases}
F^+(\T,r,\e)=\varepsilon F_1^+(\T,r) + \varepsilon^2 F_2^+(\T,r)+\mathcal{O}(\varepsilon^3),&\sin\T-r^{2k}\cos^{2k+1}\T>0,\\
F^-(\T,r,\e)=\varepsilon F_1^-(\T,r) + \varepsilon^2 F_2^-(\T,r)+\mathcal{O}(\varepsilon^3),&\sin\T-r^{2k}\cos^{2k+1}\T<0,
\end{cases}
\end{equation}
$(\T,r)\in \mathbb{S}^1\times (0,r_0)$, where
\[\begin{aligned}
F_1^+(\T,r)=&r\,h_m'(r \cos \T )\sin ^2\T -  h_m(r \cos \T )\cos \T+r\,(\beta\sin\T-\cos\T)\sin\T,\\
F_1^-(\T,r)=&r\,h_m'(r \cos \T )\sin ^2\T  -  h_m(r \cos \T )\cos \T-r\, \beta \cos ^2\T ,
\end{aligned}
\]
and
\[\begin{aligned}
F_2^-(\T,r)=&2r\cos^2\T\sin^2\T\, h_m'(r\cos\T)-\dfrac{1}{2}(\cos\T+\cos 3\T)h_m(r\cos\T)\\
&+r\cos\T\sin^3\T-\al\,\cos\T+\beta\,r\,\cos2\T\,\sin^2\T-\gamma\,r\,\sin^2\T+\CO(|\Lambda|^2),
\\
F_2^-(\T,r)=&\CO(|\Lambda|^2).
\end{aligned}
\]

In order to understand the transformed discontinuity curve, notice that  $\Sigma$ intersects the circle $u^2+v^2=r^2$ at the
points $(r\cos\T_1(r),r\sin\T_1(r))$ and $(r\cos\T_2(r),r\sin\T_2(r))$ for which the following relationships hold  (see Figure \ref{fig1}):
\begin{equation}\label{eq:id}
r^2=u^2+u^{4k+2}, \,\, \cos\T_1=\frac{u}{r},\,\, \sin\T_1=\frac{u^{2k+1}}{r}, \,\,\text{ and }\,\, \T_1(r)=\T_2(r)+\pi.
\end{equation}
\begin{figure}[H]
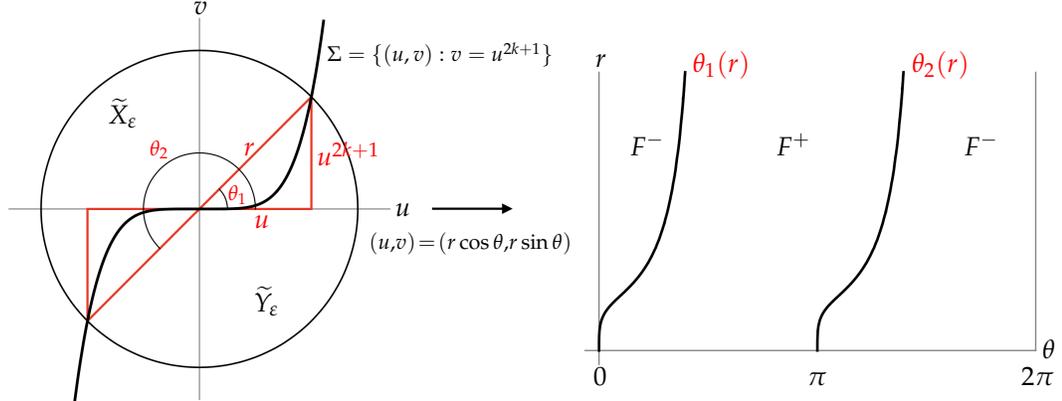

    \begin{center}
       \begin{overpic}[height=5.2cm]{Fig0.png}
            \put(21.5,19.7){\footnotesize\color{red}{$\T_1$}}
              \put(14,24){\footnotesize\color{red}{$\T_2$}}
            \put(24,17){\color{red}{$u$}}
            \put(30,23){\color{red}{$u^{2k+1}$}}
            \put(23,24){\color{red}{$r$}}
            \put(31,33){\footnotesize$\Sigma=\{(u,v):v=u^{2k+1}\}$}
            \put(35,15){\footnotesize $(u,\!v)\!=\!(r\cos\T,\!r\sin\T)$}
             \put(66,32){\color{red}{$\T_1(r)$}}
              \put(87,32){\color{red}{$\T_2(r)$}}
                \put(56.7,32.5){$r$}
                  \put(99.5,4.5){$\T$}
                   \put(56.5,2){$0$}
                    \put(97.5,2){$2\pi$}
                     \put(77,2){$\pi$}
                    \put(37.5,18.2){$u$}
                    \put(18.2,37.5){$v$}
                       \put(10,27){$\widetilde X_{\e}$}
                        \put(24,9){$\widetilde Y_{\e}$}
                         \put(60,24){$F^-$}
                         \put(74,24){$F^+$}
                         \put(92,24){$F^-$}
        \end{overpic}
        \caption{In polar coordinates, the discontinuity curve $\Sigma=\{(u,v)\in B_0:\,v=u^{2k+1}\}$ writes   $\{(\T,r)\in \mathbb{S}^1\times(0,r_0]:\, \T=\T_1(r)\,\text{ or }\, \T=\T_2(r)\}$.}\label{fig1}
    \end{center}
\end{figure}
Thus, the PSDS \eqref{eq:pol} can be written in the standard form \eqref{general-system2} as follows
\begin{equation}\label{eq:pol1}
\frac{dr}{d\T} = \begin{cases}
\varepsilon F_1^-(\T,r) + \varepsilon^2 F_2^-(\T,r)+\mathcal{O}(\varepsilon^3),&0<\T<\T_1(r),\\
\varepsilon F_1^+(\T,r) + \varepsilon^2 F_2^+(\T,r)+\mathcal{O}(\varepsilon^3),&\T_1(r)<\T<\T_1(r)+\pi,\\
\varepsilon F_1^-(\T,r) + \varepsilon^2 F_2^-(\T,r)+\mathcal{O}(\varepsilon^3),&\T_1(r)+\pi<\T<2\pi.
\end{cases}
\end{equation}

In what follows, according to Section \ref{sec:melnikov}, the Melnikov functions $M_1,M_2:(0,r_0)\to\R$ will be computed (see the Appendix for the details of the computation).

One can easily see that 
\[
 M_1(r)  = \displaystyle \int_0^{\T_1(r)}  F_1^-(\T,r)\,d\T + \int_{\T_1(r)}^{\T_2(r)}F_1^+(\T,r)\,d\T + \int_{\T_2(r)}^{2\pi}  F_1^-(\T,r)\,d\T=0.
\]

In addition,
\[\label{melnikov2}
M_2(r)= f_2(r) + f_2^*(r),
\]
where
\[\begin{aligned}
f_2(r)=&\displaystyle\int_0^{\T_1(r)}\!\!\left[D_rF_1^-(\T,r) \int_0^\T F_1(\phi,r)\,d\phi+F_2^-(\T,r)\right]\,d\T  \\ 
&+\displaystyle\int_{\T_1(r)}^{\T_2(r)}\!\!\left[D_rF_1^+(\T,r)\int_0^\T F_1(\phi,r)\,d\phi+F_2^+(\T,r)\right]\,d\T  \\ 
&+\displaystyle\int_{\T_2(r)}^{2\pi}\,\left[D_rF_1^-(\T,r)\int_0^\T F_1(\phi,r)\,d\phi+F_2^-(\T,r)\right]\,d\T
\end{aligned}\]
and
\[
\begin{aligned}
f_2^*(r) & =\Big(F_1^-(\T_1(r),r)-F_1^+(\T_1(r),r)\Big)\T_1'(r)\int_0^{\T_1(r)}F_1(\T,r)\,d\T \\
&+\Big(F_1^+(\T_2(r),r)-F_1^-(\T_2(r),r)\Big)\T_2'(r)\int_0^{\T_2(r)}F_1(\T,r)\,d\T.
\end{aligned}
\]
After some manipulation (see the Appendix), if follows that
\[
M_2\big(\sqrt{u^2+u^{4k+2}}\big)=\dfrac{P(u;\Lambda)}{Q(u)},
\]
where
\[
Q(u)=4(1+(1+2k)u^{4k})\sqrt{1+u^{4k}}
\]
and
\[
P(u;\Lambda)=\al\,f_1(u)+\beta\,f_2(u)+\gamma\,f_3(u)+\sum_{i=1}^m c_i g_1(u)+\CO(|\Lambda|^2),
\]
with 
\begin{equation}\label{eq:func}
\begin{aligned}
f_1(u)=&8u^{2k}+8(2k+1)u^{6k},\\
f_2(u)=&-\pi\,u-2\pi(3k+1)u^{4k+1}-\pi(2k+1)u^{1+8k},\\
f_3(u)=&-2\pi\,u-4\pi(k+1)u^{4k+1}-2\pi(2k+1)u^{1+8k},\,\,\text{ and}\\
g_i(u)=&8u^{2(k+p_i)},\,\text{ for }\, i\in\{1,\ldots,m\}.
\end{aligned}
\end{equation}

Now, denote
\[
\CP(u,\Lambda,\de)=\dfrac{1}{\de}P(u;\de\,\Lambda)=\al\,f_1(u)+\beta\,f_2(u)+\gamma\,f_3(u)+\sum_{i=1}^m c_i g_1(u)+\CO(\de^2).
\] 
Consider the sets of $m+3$ functions $\CF=\{f_1,f_2,f_3,g_1,\ldots,g_m\}$ and $\CG=\{h_1,\ldots,h_{m+3}\}$, where
\[
\begin{aligned}
h_1(u)&=\dfrac{2(k+1)f_2(u)-(3k+1)f_3(u)}{4\pi\,k}=u+(2k+1)u^{8k+1},\\
h_2(u)&=\dfrac{f_1(u)}{8}=u^{2k}+(2k+1)u^{6k},\\
h_3(u)&=\dfrac{f_3(u)-2f_2(u)}{8\pi\,k}=u^{4k+1},\\
h_{i+3}(u)&=\dfrac{g_i(u)}{8}=u^{2(k+p_i)},\,\text{ for }\, i\in\{1,\ldots,m\}.
\end{aligned}
\]
Notice that $\Span(\CF)=\Span(\CG)$. In addition, there exists $b_0\in (0,r_0)$ such that $\CG$ is an ECT-system on $(0,b_0)$. Indeed, denote $W_{\ell}(u)=W\big(h_1(u),\ldots,h_{\ell}(u)\big)$ for $\ell\in\{1,\ldots,m+3\}$. Thus,
\[
\begin{aligned}
W_1(u)&=u+(2k+1)u^{8k+1},\\
W_2(u)&=(2k-1)u^{2k}+\CO(u^{6k}),\\
W_3(u)&=4k(4k^2-1)u^{6k-1}+\CO(u^{10k-1}).\\
\end{aligned}
\]
Now, from the linearity of the determinant in each row and taking into account the relationship \eqref{wromon}, one gets
\[
\begin{aligned}
W_{i+3}(u)=&W(u+(2k+1)u^{8k+1},u^{2k}+(2k+1)u^{6k},u^{4k+1},u^{2(k+p_4)},\ldots,u^{2(k+p_{i})})\\
=&W(u,u^{2k},u^{4k+1},u^{2(k+p_1)},\ldots,u^{2(k+p_{i})})\\
&+(2k+1)W(u,u^{6k},u^{4k+1},u^{2(k+p_1)},\ldots,u^{2(k+p_{i})})\\
&+(2k+1)W(u^{8k+1},u^{2k},u^{4k+1},u^{2(k+p_1)},\ldots,u^{2(k+p_{i})})\\
&+(2k+1)^2W(u^{8k+1},u^{6k},u^{4k+1},u^{2(k+p_1)},\ldots,u^{2(k+p_{i})})\\
=&C_{i}\,u^{\rho_{i}}+\CO(u^{\rho_{i}+4k}),
\end{aligned}
\]
where $C_{i}\neq0$ and
\[
\rho_{i}=2\sum_{j=1}^i p_j+(3+i)2k+2-\dfrac{(i+3)(i+2)}{2}>0,
\]
for $i\in\{1,\ldots,m\}.$ Hence,
\[
\lim_{u\searrow 0}\dfrac{W_1(u)}{u}=1,\qquad\lim_{u\searrow 0}\dfrac{W_2(u)}{u^{2k}}=2k-1,\qquad\lim_{u\searrow 0}\dfrac{W_3(u)}{u^{6k-1}}=4k(4k^2-1),
\]
and
\[
\lim_{u\searrow 0}\dfrac{W_{i+3}(u)}{u^{\rho_i}}=C_i,\,\, \text{ for }\,\, i\in\{1,\ldots,m\}.
\]
This implies that there exists $b_0\in(0,r_0)$ such that all the Wronskian $W_1,\ldots, W_{m+1}$ does not vanishes on $(0,b_0)$ and, consequently, $\CG$ is an ECT-system on $(0,b_0)$.

Since $\Span(\CG)=\{\CP(\cdot,\Lambda,0):\,\Lambda\in\R^{3+m}\}$, there exists $\Lambda^*\in\R^{3+m}$ such that $\CP(\cdot,\Lambda^*,0)$ has $m+2$ simple zeros $u_i\in(0,b_0)$, $i\in\{1,\ldots,m+2\}$. 
As a simple application of the implicit function theorem, one can see that, for $\de>0$ sufficiently small,  $\CP(\cdot,\Lambda^*,\de)$ also has $m+2$  simple zeros $u_i(\de)\in(0,b_0)$  for $i\in\{1,\ldots,m+2\}$.   Consequently, by taking $\de=\de^*$ sufficiently small, in order that $\Lambda_0=\de^*\Lambda^*\in \U,$ it follows that $P(\cdot,\Lambda_0)=0$ has $m+2$ simple zeros $u_i^*=u_i(\de^*)\in(0,b_0)$, $i\in\{1,\ldots,m+2\}$.

Theorem \ref{melnikov} implies that the PSDS \eqref{eq:pol1} has $m+2$ isolated $2\pi$-periodic solutions $r_i(\T,\e)$ such that $r_i(\T,0)=r_i^*<b_0<r_0.$ Each solution corresponds to a hyperbolic limit cycle of $\widetilde Z_{\e}(x,y;\Lambda_0)$ inside $B_0.$ Going back through the change of variables, we conclude that  
$Z_{\e}(x,y;\Lambda_0,0)$ has $m+2$ hyperbolic limit cycles inside $U$. 

Finally, the proof follows by noticing that $Z_{\e}(x,y;\Lambda_0,\mu)$ undergoes a pseudo-Hopf bifurcation at $\mu=0$ around the origin, which provides a new limit cycle of small amplitude. Since the previous $m+2$ limit cycles are hyperbolic, there exists $\mu_0\in I$ sufficiently small for which $Z_{\e}(x,y;\Lambda_0,\mu_0)$ has $m+3$ limit cycles. 
\end{proof}

\subsection{Proof of Theorem \ref{main} in the odd degree case}\label{proofodd}
Consider the following family of planar piecewise linear vector fields with two zones separated by a branch of an algebraic curve of degree $2k+1$:
\begin{equation}\label{eq:odd}
Z_{\e}(x,y;\Lambda,\mu)=\left\{ \begin{array}{lc}X_{\e}(x,y;\Lambda,\mu), &  y-x^{2k+1}-\e H_k(x,y;c)>0, \\ 
Y_{\e}(x,y;\Lambda,\mu), &  y-x^{2k+1}-\e H_k(x,y;c)<0, 
\end{array}
\right.
\end{equation}
where 
\[
H_k(x,y;c)=\sum_{j=1}^{k}\sum_{i=0}^{2j}c_{ij} x^i y^{2j-i}
\] 
and $$c=(\underbrace{c_{0,1},c_{1,1},c_{2,1}}_{3},\underbrace{c_{0,2},\cdots,c_{4,2}}_{5},\cdots, \underbrace{c_{0,j}\cdots, c_{2j,j}}_{2j+1},\cdots,\underbrace{c_{0,k},\cdots,c_{2k,k}}_{2k+1})\in\R^{k^2+2k}.$$
Notice that $H_k(x,y;c)$ is a polynomial function on the variables $(x,y)$ of degree $2k$  having only monomials of even degree and $H_k(0,0;c)=0$.

\begin{lemma}\label{lem:odd}
Given compact neighborhoods $U_0\subset D$ and $\V_0\subset\R^{k^2+2k}$ of $(0,0)\in\R^2$ and $(0,\ldots,0)\in\R^{k^2+2k}$, respectively, there exist $\e_0>0$ such that
\[
U_0\cap\Sigma_{\e}^k=\{(x,y)\in U_0:\,y= x^{2k+1}+\e\, h_m(x;c)+\e^2\CO(|c|^2)+\CO(\e^3)\},
\]
for every $(c,\e)\in \V_0\times [-\e_0,\e_0]$, where \[
h_m(x;c)=H_k(x,x^{2k+1};c)=\sum_{j=1}^{k}\sum_{i=0}^{2j}c_{ij} x^{p_{i,j}},\quad p_{i,j}=i+(2k+1)(2j-i)=2(2k+1)j-2k\,i.
\]
In addition, $p_{i_1,j_1}=p_{i_2,j_2}$ if, and only if, $i_1=i_2$ and $j_1=j_2$.
\end{lemma}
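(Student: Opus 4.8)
The plan is to solve the defining equation of $\Sigma_\e^k$ for $y$ in a neighbourhood of the origin by the implicit function theorem, and then to read off the $\e$-expansion of the resulting branch. Set $G(x,y;c,\e)=y-x^{2k+1}-\e\,H_k(x,y;c)$, which is polynomial, hence real-analytic, in all of its arguments. Since $G(x,x^{2k+1};c,0)=0$ for every $(x,c)$ and $\partial_y G(x,y;c,\e)=1-\e\,\partial_yH_k(x,y;c)$ equals $1$ when $\e=0$, the implicit function theorem yields a unique analytic function $y=Y(x;c,\e)$, with $Y(x;c,0)=x^{2k+1}$, solving $G(x,Y(x;c,\e);c,\e)=0$. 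The uniformity over $(c,\e)\in\V_0\times[-\e_0,\e_0]$ asked for in the statement follows from the compactness of $U_0\subset D$ and $\V_0$: bounding $\partial_yH_k$ on $U_0\times\V_0$ by a constant $C_0$ and taking $\e_0<1/(2C_0)$ makes $\partial_yG\ge 1/2>0$ throughout $U_0$, so $G(x,\cdot\,;c,\e)$ is strictly increasing along each vertical slice of $U_0$; hence $\Sigma_\e^k\cap U_0$ meets each such slice in at most one point and, for $x$ near $0$, in exactly the point $(x,Y(x;c,\e))$. This gives the set equality $U_0\cap\Sigma_\e^k=\{(x,y)\in U_0:\ y=Y(x;c,\e)\}$.

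Next I would extract the structure of $Y$. Inserting the Taylor expansion $Y(x;c,\e)=x^{2k+1}+\e\,Y_1(x;c)+\e^2Y_2(x;c)+\CO(\e^3)$ --- with analytic coefficients $Y_1,Y_2$ and a remainder uniform over the compact domain --- into the identity $Y=x^{2k+1}+\e\,H_k(x,Y;c)$ and expanding $H_k$ in its second argument about $y=x^{2k+1}$, comparison of powers of $\e$ gives $Y_1(x;c)=H_k(x,x^{2k+1};c)$ at order $\e$ and $Y_2(x;c)=Y_1(x;c)\,\partial_yH_k(x,x^{2k+1};c)$ at order $\e^2$. Since $H_k(x,y;c)$ is homogeneous of degree one in $c$, both $H_k(x,x^{2k+1};c)$ and $\partial_yH_k(x,x^{2k+1};c)$ are $\CO(|c|)$ uniformly for $x$ in the (compact) domain, so $Y_2(x;c)=\CO(|c|^2)$. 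Writing $h_m(x;c):=H_k(x,x^{2k+1};c)$, this is exactly $Y(x;c,\e)=x^{2k+1}+\e\,h_m(x;c)+\e^2\CO(|c|^2)+\CO(\e^3)$. Substituting $y=x^{2k+1}$ into each monomial $c_{ij}x^iy^{2j-i}$ of $H_k$ identifies $h_m(x;c)=\sum_{j=1}^k\sum_{i=0}^{2j}c_{ij}x^{p_{i,j}}$ with $p_{i,j}=i+(2k+1)(2j-i)=2(2k+1)j-2ki$.

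For the last assertion, note first that $p_{i,j}=2\,((2k+1)j-ki)$ is even and, because $0\le i\le 2j$ gives $(2k+1)j-ki\ge(2k+1)j-2kj=j\ge 1$, strictly positive. If $p_{i_1,j_1}=p_{i_2,j_2}$ then $(2k+1)(j_1-j_2)=k(i_1-i_2)$; as $\gcd(2k+1,k)=1$ this forces $(2k+1)\mid(i_1-i_2)$, while $0\le i_s\le 2j_s\le 2k$ yields $|i_1-i_2|\le 2k<2k+1$, so $i_1=i_2$ and then $j_1=j_2$. The converse is trivial.

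The argument has no deep step; the only care needed is the bookkeeping in the second paragraph, namely checking that the $\e^2\CO(|c|^2)$ and $\CO(\e^3)$ remainders are genuinely uniform over the compact parameter set $\V_0$ and the $x$-domain, together with the standard but slightly fiddly use of compactness and the monotonicity of $G$ in $y$ to ensure that no additional branch of the algebraic set $\Sigma_\e^k$ enters $U_0$. The injectivity of $(i,j)\mapsto p_{i,j}$ is immediate.
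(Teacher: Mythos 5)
Your proof is correct and follows essentially the same route as the paper: the implicit function theorem applied to $y-x^{2k+1}-\e H_k(x,y;c)$ about $y=x^{2k+1}$, with uniformity from compactness, the same order-by-order expansion giving the $\e^2\CO(|c|^2)$ term, and an elementary arithmetic check that the exponents $p_{i,j}$ are distinct. The only differences are cosmetic: you justify the absence of other branches in $U_0$ via monotonicity of $G$ in $y$ (a point the paper leaves implicit), and you prove injectivity of $(i,j)\mapsto p_{i,j}$ with a $\gcd(2k+1,k)=1$ divisibility argument where the paper uses a direct inequality; both are sound.
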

\begin{proof}
Let $I_0\subset\R$ be the compact interval for which $(x,x^{2k+1})\in U_0$ for every $x\in I_0$. Define $\CH(x,y;c,\e)=y-x^{2k+1}-\e H_k(x,y;c).$ Notice that 
\[
\CH(x,x^{2k+1};c,0)=0 \,\,\text{ and }\,\,\dfrac{\p\CH}{\p y}(x,x^{2k+1};c,0)= 1,
\]
for every $(x,c)\in I_0\times \V_0$. Thus, taking into account the compactness of $I_0\times\V_0$, the implicitly function theorem provides the existence of $\e_0>0$ and a unique function $\xi:I_0\times\V_0\times[-\e_0,\e_0]\to \R$ such that $\xi(x,c,0)=x^{2k+1}$ and $\CH(x,\xi(x,c,\e);c,\e)=0$ for every $(x,c,\e)\in I_0\times\V_0\times[-\e_0,\e_0].$ In addition, one can easily see that
\[
\begin{aligned}
\xi(x,c,\e)=&x^{2k+1}+\e H_k(x,x^{2k+1};c)+\e^2 H_k(x,x^{2k+1};c)\dfrac{\p H_k}{\p y}(x,x^{2k+1};c)+\CO(\e^3)\\
=&x^{2k+1}+\e H_k(x,x^{2k+1};c)+\e^2 \CO(|c|^2)+\CO(\e^3).
\end{aligned}
\]
Hence, the first part of the result follows by taking $h_m(x;c)=H_k(x,x^{2k+1};c)$.

Now, assume that, for some $j_1,j_2\in\{1,\ldots,k\}$ and $i_1,i_2\in\{0,\ldots,2k\},$ with $i_1\leq 2j_1$ and  $i_2\leq 2j_2$, one has  $p_{i_1,j_1}=p_{i_2,j_2}$, that is, $i_1+(2k+1)(2j_1-i_1)=i_2+(2k+1)(2j_2-i_2)$. If $2j_1-i_1=2j_2-i_2$, then $i_1=i_2$, which also implies that $j_1=j_2$. Thus, assume that $2j_2-i_2>2j_1-i_1$. In this case, $i_2+(2k+1)\big((2j_2-i_2)-(2j_1-i_1)\big)=i_1\leq 2k+1$, which is an absurd, because $i\leq 2k$. Hence, $i_1=i_2$ and $j_1=j_2$.
\end{proof}

From here, the proof of Theorem \ref{main} in the odd degree case follows from Proposition \ref{prop:funda} and by applying Lemma \ref{lem:odd} to the planar piecewise polynomial vector field \eqref{eq:odd}. Indeed, in this cases, Lemma \ref{lem:odd} implies that \eqref{eq:odd}, restricted to $U_0$, writes like \eqref{general-system1} with $m=k^2+2k$. Therefore, from Proposition \ref{prop:funda}, there exists $\Lambda_0\in\U_0=\R^3\times \V_0\subset\R^{k^2+2k+3}$ and $\mu_0\in\R$ such that \eqref{eq:odd} has $k^2+2k+3$ limit cycles in $U_0$. This implies that $\CL(2k+1)\geq k^2+2k+3$.

\subsection{Proof of Theorem \ref{main} in the even degree case}\label{proofeven}
Consider the following family of planar piecewise linear vector fields with two zones separated by a branch of an algebraic curve of degree $2k+2$:
\begin{equation}\label{eq:even}
Z_{\e}(x,y;\Lambda,\mu)=\left\{ \begin{array}{lc}X_{\e}(x,y;\Lambda,\mu), &  y-x^{2k+1}-\e H_k(x,y;c)>0, \\ 
Y_{\e}(x,y;\Lambda,\mu), &  y-x^{2k+1}-\e H_k(x,y;c)<0, 
\end{array}
\right.
\end{equation}
where 
\[
H_k(x,y;c)=\sum_{j=1}^{k+1}\sum_{i=0}^{2j}c_{ij} x^i y^{2j-i}
\] 
and $$c=(\underbrace{c_{0,1},c_{1,1},c_{2,1}}_{3},\underbrace{c_{0,2},\cdots,c_{4,2}}_{5},\cdots, \underbrace{c_{0,j}\cdots, c_{2j,j}}_{2j+1},\cdots,\underbrace{c_{0,k},\cdots,c_{2k+2,k+1}}_{2k+2+1})\in\R^{(k+1)^2+2(k+1)}.$$
Notice that $H_k(x,y;c)$ is a polynomial function on the variables $(x,y)$ of degree $2k+2$  having only monomials of even degree and $H_k(0,0;c)=0$.

\begin{lemma}\label{lem:even}
Given compact neighborhoods $U_0\subset \R^2$ and $\V_0\subset\R^{(k+1)^2+2(k+1)}$ of $(0,0)\in\R^2$ and $(0,\ldots,0)\in\R^{(k+1)^2+2(k+1)}$, respectively, there exist $\e_0>0$
such that
\[
U_0\cap\Sigma_{\e}^k=\{(x,y)\in U_0:\,y= x^{2k+1}+\e\, h_m(x;c)+\e^2\CO(|c|^2)+\CO(\e^3)\},
\]
for every $(c,\e)\in \V_0\times [-\e_0,\e_0]$, where \[
h_m(x;c)=H_k(x,x^{2k+1};c)=\sum_{j=1}^{k+1}\sum_{i=0}^{2j}c_{ij} x^{p_{i,j}},\quad p_{i,j}=i+(2k+1)(2j-i)=2(2k+1)j-2k\,i.
\]
In addition, assuming $i_1\geq i_2$, one has $p_{i_1,j_1}=p_{i_2,j_2}$ if, and only if, one of the conditions hold:
\begin{itemize}
\item[(a)] $i_1=i_2$ and $j_1=j_2$;
\item[(b)] $i_1=2k+1,$ $i_2=0$, $j_1=k+1$,  and $j_2=1$;
\item[(c)] $i_1=2k+2,$ $i_2=1$, $j_1=k+1$,  and $j_2=1$.
\end{itemize}
\end{lemma}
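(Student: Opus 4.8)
The first part of the lemma—describing $U_0\cap\Sigma_\e^k$ as a graph $y = x^{2k+1} + \e\,h_m(x;c) + \e^2\CO(|c|^2) + \CO(\e^3)$ with $h_m(x;c) = H_k(x,x^{2k+1};c)$—follows \emph{verbatim} from the argument in the proof of Lemma \ref{lem:odd}: set $\CH(x,y;c,\e) = y - x^{2k+1} - \e H_k(x,y;c)$, note that $\CH(x,x^{2k+1};c,0) = 0$ and $\p_y\CH(x,x^{2k+1};c,0) = 1$ on the compact set $I_0\times\V_0$, and apply the implicit function theorem to obtain $\xi(x,c,\e)$ with the stated Taylor expansion in $\e$. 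Nothing in this step uses that the degree of $H_k$ is $2k$ versus $2k+2$, so it carries over unchanged with the new summation range $j\in\{1,\ldots,k+1\}$ and $c\in\R^{(k+1)^2+2(k+1)}$.

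\textbf{The exponent identification.} The substantive part is the combinatorial claim about when $p_{i_1,j_1} = p_{i_2,j_2}$, where $p_{i,j} = i + (2k+1)(2j-i) = 2(2k+1)j - 2k\,i$, with $1\le j \le k+1$ and $0\le i\le 2j$. I would argue as in Lemma \ref{lem:odd} but track the one extra case that the larger degree allows. Suppose $p_{i_1,j_1} = p_{i_2,j_2}$ with $i_1\ge i_2$. Writing $d_s = 2j_s - i_s \ge 0$ (the $y$-degree of the monomial), the equation $p_{i,j} = i + (2k+1)d$ with $d = 2j - i$ gives $i_1 + (2k+1)d_1 = i_2 + (2k+1)d_2$, i.e. $i_1 - i_2 = (2k+1)(d_2 - d_1)$. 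Since $i_1 \ge i_2$ we have $d_2 \ge d_1$. If $d_1 = d_2$ then $i_1 = i_2$, and then $j_1 = (i_1+d_1)/2 = (i_2+d_2)/2 = j_2$, giving case (a). Otherwise $d_2 - d_1 \ge 1$, so $i_1 - i_2 = (2k+1)(d_2-d_1) \ge 2k+1$; combined with $i_2 \ge 0$ and $i_1 \le 2j_1 \le 2(k+1) = 2k+2$ this forces $i_1 - i_2 \in \{2k+1, 2k+2\}$ and hence $d_2 - d_1 = 1$.

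\textbf{The two extra cases.} In the case $i_1 - i_2 = 2k+1$: the only possibility compatible with $0\le i_2$ and $i_1 \le 2k+2$ is $i_2 = 0$, $i_1 = 2k+1$ (if $i_1 = 2k+2$ then $i_2 = 1 > 0$ is still allowed but gives a \emph{different} value of the difference—wait, that's exactly $i_1 - i_2 = 2k+1$ again; I must instead split by the value of $i_2$). Let me reorganize: from $i_1 = i_2 + 2k+1$ and $d_2 = d_1 + 1$, the constraint $i_1 \le 2j_1 = i_1 + d_1$ is automatic, while $i_2 \le 2j_2 = i_2 + d_2$ is automatic; the real constraints are $i_1 \le 2k+2$ (so $i_2 \le 1$) and $j_1 = (i_1+d_1)/2 \le k+1$, $j_2 = (i_2+d_2)/2 \ge 1$. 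For $i_2 = 0$: $i_1 = 2k+1$, so $i_1 + d_1 = 2k+1+d_1$ is even forces $d_1$ odd, and $j_1 = (2k+1+d_1)/2 \le k+1$ forces $d_1 \le 1$, hence $d_1 = 1$, $d_2 = 2$, giving $j_1 = k+1$, $j_2 = 1$, $i_2 = 0$—case (b). For $i_2 = 1$: $i_1 = 2k+2$, $i_1 + d_1$ even forces $d_1$ even, $j_1 = (2k+2+d_1)/2 \le k+1$ forces $d_1 = 0$, $d_2 = 1$, giving $j_1 = k+1$, $j_2 = 1$, $i_2 = 1$, $i_1 = 2k+2$—case (c). The case $i_1 - i_2 = 2k+2$ would force $d_2 - d_1 = 1$ yet $(2k+1)\cdot 1 = 2k+1 \ne 2k+2$, a contradiction, so it does not occur. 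I expect the only real risk of error here to be bookkeeping with the parity and boundary constraints on $i_s$ and $j_s$; the argument is elementary once those are written out carefully. Consequently, $h_m(x;c)$ has the form $\sum_{\ell} \tilde c_\ell x^{q_\ell}$ with distinct exponents $q_\ell$, where the coefficients corresponding to the three coincident pairs in (b) and (c) are merged (so the effective number of free monomials drops from $(k+1)^2 + 2(k+1)$ by $2$, to $m = k^2 + 4k + 1$), which is exactly the count needed downstream to feed Proposition \ref{prop:funda}.
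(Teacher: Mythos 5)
Your proof is correct and follows essentially the same route as the paper's: the graph description of $\Sigma_\e^k$ via the implicit function theorem exactly as in Lemma \ref{lem:odd}, and the exponent coincidences reduced to $i_1-i_2=(2k+1)(2j_2-i_2-(2j_1-i_1))$ together with the bounds $0\le i\le 2j\le 2k+2$, which force the difference to be $0$ (case (a)) or $2k+1$ (cases (b) and (c)). Your bookkeeping with $d_s=2j_s-i_s$, the parity check, and the final count $m=(k+1)^2+2(k+1)-2$ all match the paper's argument and conclusion.
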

\begin{proof}
The first part of the proof of this lemma is exactly the same as Lemma \ref{lem:odd}.

Assume that, for some $j_1,j_2\in\{1,\ldots,k+1\}$ and $i_1,i_2\in\{0,\ldots,2k+2\},$ with $i_1\leq 2j_1$ and  $i_2\leq 2j_2$, one has  $p_{i_1,j_1}=p_{i_2,j_2}$, that is, $i_1+(2k+1)(2j_1-i_1)=i_2+(2k+1)(2j_2-i_2)$. If $2j_1-i_1=2j_2-i_2$, then $i_1=i_2$, which also implies that $j_1=j_2$, so condition (a) holds. Now, assume  that condition (a) does not hold, in particular, $2j_2-i_2>2j_1-i_1$. In this case, $i_2+(2k+1)\big((2j_2-i_2)-(2j_1-i_1)\big)=i_1\leq 2k+1$. Thus, $(2j_2-i_2)-(2j_1-i_1)=1$ and either $i_2=0$ and $i_1=2k+1$, or $i_2=1$ and $i_1=2k+2.$  For both cases, one has $j_1=j_2+k$ and, since $j_1,j_2\in\{1,\ldots,k-1\}$, implies that $j_2=1$ and $j_1=k+1$.
\end{proof}

From here, the proof of Theorem \ref{main} in the even degree case follows from Proposition \ref{prop:funda} and by applying Lemma \ref{lem:even} to the planar piecewise polynomial vector field \eqref{eq:even}. Indeed, in this cases, Lemma \ref{lem:even} implies that \eqref{eq:even}, restricted to $U_0$, writes like \eqref{general-system1} with $m=(k+1)^2+2(k+1)-2$. This is because, from items (b) and (c) of Lemma \ref{lem:even}, the powers $p_{2k+1,k+1}$ and $p_{2k+2,k+1}$ coincides with $p_{0,1}$ and $p_{1,1},$ respectively, reducing by two the number of distinct monomials with even powers in $h_m$. 

Therefore, from Proposition \ref{prop:funda}, there exists $\Lambda_0\in\U_0=\R^3\times \V_0\subset\R^{(k+1)^2+2(k+1)+1}$ and $\mu_0\in\R$ such that \eqref{eq:even} has $(k+1)^2+2(k+1)+1$ limit cycles in $U_0$. This implies that $\CL(2k+2)\geq (k+1)^2+2(k+1)+1$ or, equivalently, $\CL(2k)\geq k^2+2k+1.$

\section*{Appendix: Computation of the Melnikov functions}
This appendix is devoted to present the details of the computation of the Melnikov functions, $M_1$ and $M_2$, of the PSDS \eqref{eq:pol1}. Recall that,
\begin{equation}\label{F1}
\begin{aligned}
F_1^+(\T,r)=&r\,h_m'(r \cos \T )\sin ^2\T -  h_m(r \cos \T )\cos \T+r\,(\beta\sin\T-\cos\T)\sin\T\\
=&-r\cos\T\,\sin\T+\CO(|\Lambda|),\\
F_1^-(\T,r)=&r\,h_m'(r \cos \T )\sin ^2\T  -  h_m(r \cos \T )\cos \T-r\, \beta \cos ^2\T=\CO(|\Lambda|) ,
\end{aligned}
\end{equation}
and
\begin{equation}\label{F2}
\begin{aligned}
F_2^+(\T,r)=&2r\cos^2\T\sin^2\T\, h_m'(r\cos\T)-\dfrac{1}{2}(\cos\T+\cos 3\T)h_m(r\cos\T)\\
&+r\cos\T\sin^3\T-\al\,\cos\T+\beta\,r\,\cos2\T\,\sin^2\T-\gamma\,r\,\sin^2\T+\CO(|\Lambda|^2),
\\
F_2^-(\T,r)=&\CO(|\Lambda|^2).
\end{aligned}
\end{equation}

\subsection{Computation of $M_1$}

According to Section \ref{sec:melnikov},
\[
 M_1(r)  = \int_0^{\T_1(r)}  F_1^-(\T,r)\,d\T + \int_{\T_1(r)}^{\T_2(r)}F_1^+(\T,r)\,d\T + \int_{\T_2(r)}^{2\pi}  F_1^-(\T,r)\,d\T.
 \]
where $\T_2(r)=\T_1(r)+\pi$ (see the relationships \eqref{eq:id}). By computing the integrals above separately, one has
 \begin{equation}\label{I1}
 \begin{aligned}
\int_0^{\T_1(r)}&  F_1^-(\T,r)\,d\T=\\
=&  \int_0^{\T_1(r)} \Big( r\,h_m'(r \cos \T )\sin ^2\T  -  h_m(r \cos \T )\cos \T-r\, \beta \cos ^2\T \Big)\,d\T\\
=& \int_0^{\T_1(r)} \Big( r\,h_m'(r \cos \T )\sin ^2\T  -  h_m(r \cos \T )\cos \T\Big)\,d\T-r \beta \int_0^{\T_1(r)} \cos ^2\T\, d\T\\
=&\int_0^{\T_1(r)} \Big( r\,h_m'(r \cos \T )\sin ^2\T  -  h_m(r \cos \T )\cos \T\Big)\,d\T-\dfrac{r \beta}{4}(2\T_1(r)+\sin2\T_1(r)),
\end{aligned}
\end{equation}
 \begin{equation}\label{I2}
   \begin{aligned}
\int_{\T_1(r)}^{\T_2(r)}&F_1^+(\T,r)\,d\T=\\
=& \int_{\T_1(r)}^{\T_2(r)} \Big(r\,h_m'(r \cos \T )\sin ^2\T -  h_m(r \cos \T )\cos \T+r\,(\beta\sin\T-\cos\T)\sin\T \Big)\,d\T \\
=& \int_{\T_1(r)}^{\T_2(r)} \Big(r\,h_m'(r \cos \T )\sin ^2\T -  h_m(r \cos \T )\cos \T\Big)\,d\T\\
&+r\, \int_{\T_1(r)}^{\T_2(r)}(\beta\sin\T-\cos\T)\sin\T\,d\T \\
=& \int_{\T_1(r)}^{\T_2(r)} \Big(r\,h_m'(r \cos \T )\sin ^2\T -  h_m(r \cos \T )\cos \T\Big)\,d\T+\dfrac{\pi\,\beta\,r}{2},
\end{aligned}
\end{equation}
 and
 \begin{equation}\label{I3}
    \begin{aligned}
 \int_{\T_2(r)}^{2\pi}&  F_1^-(\T,r)\,d\T=\\
 =&\int_{\T_2(r)}^{2\pi}\Big( r\,h_m'(r \cos \T )\sin ^2\T  -  h_m(r \cos \T )\cos \T-r\, \beta \cos ^2\T \Big)\,d\T\\
 =&\int_{\T_2(r)}^{2\pi}\Big( r\,h_m'(r \cos \T )\sin ^2\T  -  h_m(r \cos \T )\cos \T\Big)-r \beta\int_{\T_2(r)}^{2\pi} \cos ^2\T\,d\T\\
  =&\int_{\T_2(r)}^{2\pi}\Big( r\,h_m'(r \cos \T )\sin ^2\T  -  h_m(r \cos \T )\cos \T\Big)\\
  &-\dfrac{r \beta}{4}(2(\pi-\T_1(r))-\sin2\T_1(r))\,d\T.
 \end{aligned}
 \end{equation}
 Adding up the integrals above, the first order Melnikov function writes
 \[
 M_1(r)  = \int_0^{2\pi}\Big(r\,h_m'(r \cos \T )\sin ^2\T  -  h_m(r \cos \T )\cos \T\Big)\,d\T.
 \]
 Since the integrand above has only terms of kind $\cos^{2p-1}\T\sin^2\T$ and $\cos^{2p+1}\T$ for $p\geq 1$ integer, it follows that $M_1(r)=0$.

\subsection{Computation of $M_2$}

According to Section \ref{sec:melnikov},
\[\label{melnikov2}
M_2(r)= f_2(r) + f_2^*(r),
\]
where
\[\begin{aligned}
f_2(r)=&\int_0^{\T_1(r)}\!\!\left[D_rF_1^-(\T,r) \int_0^\T F_1(\phi,r)\,d\phi+F_2^-(\T,r)\right]\,d\T  \\ 
&+\displaystyle\int_{\T_1(r)}^{\T_2(r)}\!\!\left[D_rF_1^+(\T,r)\int_0^\T F_1(\phi,r)\,d\phi+F_2^+(\T,r)\right]\,d\T  \\ 
&+\displaystyle\int_{\T_2(r)}^{2\pi}\,\left[D_rF_1^-(\T,r)\int_0^\T F_1(\phi,r)\,d\phi+F_2^-(\T,r)\right]\,d\T\\
\end{aligned}\]
and
\[
\begin{aligned}
f_2^*(r) & =\Big(F_1^-(\T_1(r),r)-F_1^+(\T_1(r),r)\Big)\T_1'(r)\int_0^{\T_1(r)}F_1(\T,r)\,d\T \\
&+\Big(F_1^+(\T_2(r),r)-F_1^-(\T_2(r),r)\Big)\T_2'(r)\int_0^{\T_2(r)}F_1(\T,r)\,d\T,
\end{aligned}
\]
with $\T_2(r)=\T_1(r)+\pi$ (see the relationships \eqref{eq:id}). 

First of all, notice that, for $\T\in[0,\T_1(r)],$
 \begin{equation}\label{dFIF1}
 \begin{aligned}
\int_0^{\T}  F_1(\phi,r)\,d\phi=&\int_0^{\T}  F_1^-(\phi,r)\,d\phi\\
=& \int_0^{\T} \Big( r\,h_m'(r \cos \phi )\sin ^2\phi  -  h_m(r \cos \phi )\cos \phi\Big)\, d\phi-r \beta \int_0^{\T} \cos ^2\phi\, d\phi\\
=&\int_0^{\T} \Big( r\,h_m'(r \cos \phi )\sin ^2\phi  -  h_m(r \cos \phi )\cos \phi\Big)\, d\phi-\dfrac{r \beta}{4}(2\T+\sin2\T)\\=&\CO(|\Lambda|);
\end{aligned}
  \end{equation}
for $\T\in[\T_1(r),\T_2(r)],$
 \begin{equation}\label{dFIF2}
 \begin{aligned}
\int_0^{\T}  F_1(\phi,r)\,d\phi=&\int_0^{\T_1(r)}  F_1(\phi,r)\,d\phi +\int_{\T_1(r)}^{\T}F_1^+(\phi,r)\,d\phi\\
=& \int_{0}^{\T} \Big(r\,h_m'(r \cos \phi )\sin ^2\phi -  h_m(r \cos \phi )\cos \phi\Big)\, d\phi\\
&-\dfrac{r \beta}{4}(2\T_1(r)+\sin2\T_1(r))+r\, \int_{\T_1(r)}^{\T}(\beta\sin\phi-\cos\phi)\sin\phi\, d\phi \\
=& \int_{0}^{\T} \Big(r\,h_m'(r \cos \phi )\sin ^2\phi -  h_m(r \cos \phi )\cos \phi\Big)\, d\phi\\
&+\dfrac{r}{4}\left(2\beta\T-\beta\sin 2\T -4\beta\T_1(r)\right)+\dfrac{r}{4}\left(\cos 2\T-\cos2\T_1(r)\right)\\
=&\dfrac{r}{4}\left(\cos 2\T-\cos2\T_1(r)\right)+\CO(|\Lambda|);
\end{aligned}
  \end{equation}
and,  for $\T\in[\T_2(r),2\pi],$
 \begin{equation}\label{dFIF3}
   \begin{aligned}
\int_{0}^{\T}F_1(\phi,r)\,d\phi=&\int_{0}^{\T_2(r)}F(\phi,r)\,d\phi+\int_{\T_2(r)}^{\T}F_1^-(\phi,r)\,d\phi\\
=&\int_{0}^{\T}\Big( r\,h_m'(r \cos \phi )\sin ^2\phi  -  h_m(r \cos \phi )\cos \phi\Big)\,d\phi\\
&-\dfrac{\beta\,r}{4}\left(2\T_1(r)-2\pi+\sin 2\T_1(r)\right)-r \beta\int_{\T_2(r)}^{\T} \cos ^2\phi\, d\phi\\
=&\int_{0}^{\T}\Big( r\,h_m'(r \cos \phi )\sin ^2\phi  -  h_m(r \cos \phi )\cos \phi\Big)\,d\phi\\
&-\dfrac{\beta\,r}{4}\left(2\T-4\pi+\sin2\T\right)=\CO(|\Lambda|).
\end{aligned}
  \end{equation}

Since, from \eqref{F1}, \eqref{F2}, \eqref{dFIF1}, and \eqref{dFIF3}, 
\[
\int_0^{\T_1(r)}\!\!\left[D_rF_1^-(\T,r) \int_0^\T F_1(\phi,r)\,d\phi+F_2^-(\T,r)\right]\,d\T=\CO(|\Lambda|^2)
\]
and 
\[
\int_{\T_2(r)}^{2\pi}\,\left[D_rF_1^-(\T,r)\int_0^\T F_1(\phi,r)\,d\phi+F_2^-(\T,r)\right]\,d\T=\CO(|\Lambda|^2),
\]
thus
\[
f_2(r)=\int_{\T_1(r)}^{\T_2(r)}\!\!\left[D_rF_1^+(\T,r)\int_0^\T F_1(\phi,r)\,d\phi+F_2^+(\T,r)\right]\,d\T.
\]
Now, from \eqref{F1}, \eqref{F2}, and \eqref{dFIF2}, for $\T\in[\T_1(r),\T_2(r)],$ one has
\[
\begin{aligned}
D_rF_1^+(\T,r)\int_0^\T& F_1(\phi,r)\,d\phi+F_2^+(\T,r)=\\
=&\dfrac{r}{8}\sin2\T\,\Big(2-3\cos2\T+\cos2\T_1(r)\Big)-\al\cos\T-r\,\gamma \sin^2\T\\
&+\beta\dfrac{r}{2}\sin\T\Big((2\T_1(r)-\T)\cos\T+\sin\T\big(3\cos2\T+\sin^2\T_1(r)\big)\Big)\\
&-\dfrac{1}{2}(\cos\T+\cos 3\T)h_m(r\cos\T)\\
&+\dfrac{r}{8}\Big(1-3\cos4\T+2\cos 2\T\,\cos2\T_1(r)\Big)h_m'(r \cos \T )\\
&+\dfrac{r^2}{4}\cos\T\,\sin^2\T\,\left(\cos 2\T-\cos2\T_1(r)\right) \,h_m''(r \cos \T )\\
&-\cos\T\sin\T\int_{0}^{\T} \Big(r\,h_m'(r \cos \phi )\sin ^2\phi -  h_m(r \cos \phi )\cos \phi\Big)\, d\phi+\CO(|\Lambda|^2).\\
\end{aligned}
\]
Therefore,
\begin{equation}\label{eq:f2}
\begin{aligned}
f_2(r)=&2\al\sin\T_1(r)-\dfrac{\pi\,r}{4}(\beta+2\gamma)+\int_{\T_1(r)}^{\T_2(r)}\left[-\dfrac{1}{2}(\cos\T+\cos 3\T)h_m(r\cos\T)\right.\\
&+\dfrac{r}{8}\Big(1-3\cos4\T+2\cos 2\T\,\cos2\T_1(r)\Big)h_m'(r \cos \T )\\
&\left.+\dfrac{r^2}{4}\cos\T\,\sin^2\T\,\left(\cos 2\T-\cos2\T_1(r)\right) \,h_m''(r \cos \T )\right]\,d\T\\
&-\int_{\T_1(r)}^{\T_2(r)}\left[\cos\T\sin\T\int_{0}^{\T} \Big(r\,h_m'(r \cos \phi )\sin ^2\phi - h_m(r \cos \phi )\cos \phi\Big)\, d\phi\right]\,d\T\\
&+\CO(|\Lambda|^2).
\end{aligned}
\end{equation}

In order to compute the increment $f_2^*(r)$, notice that
\[
F_1^-(\T_1(r),r)-F_1^+(\T_1(r),r)=r\cos\T_1(r)\sin\T_1(r)-\beta r
\]
and
\[
F_1^+(\T_2(r),r)-F_1^-(\T_2(r),r)=-r\cos\T_1(r)\sin\T_1(r)+\beta r.
\]
Thus,
\[
\begin{aligned}
\Big(F_1^-(\T_1(r),r)&-F_1^+(\T_1(r),r)\Big)\T_1'(r)\int_0^{\T_1(r)}F_1(\T,r)\,d\T\\=&-\beta \dfrac{r^2}{8} \T'_1(r)\sin 2\T_1(r)\left(\sin 2\T_1(r)+2\T_1(r)\right) \\
&+\dfrac{r}{2}\T'_1(r)\sin 2\T_1(r)\int_0^{\T_1(r)} \Big( r\,h_m'(r \cos \T )\sin ^2\T  -  h_m(r \cos \T )\cos \T\Big)\,d\T+\CO(|\Lambda|^2)
\end{aligned}
\]
and
\[
\begin{aligned}
\Big(F_1^-(\T_2(r),r)&-F_2^+(\T_1(r),r)\Big)\T_2'(r)\int_0^{\T_2(r)}F_1(\T,r)\,d\T\\=&\beta \dfrac{r^2}{8} \T'_1(r)\sin 2\T_1(r)\left(-2\pi+\sin 2\T_1(r)+2\T_1(r)\right) \\
&-\dfrac{r}{2}\,\T'_1(r)\sin 2\T_1(r)\int_0^{\T_2(r)} \Big( r\,h_m'(r \cos \T )\sin ^2\T  -  h_m(r \cos \T )\cos \T\Big)\,d\T+\CO(|\Lambda|^2),
\end{aligned}
\]
where
\[
\T'_1(r)=\T'_2(r)=\dfrac{2k\,r^{2k-1}\cos^{2k}\T_1(r)}{1+(2k+1)r^{2k}\cos^{2k-1}\T_1(r)\,\sin\T_1(r)}
\]
or, equivalently, considering the relationships \eqref{eq:id},
\begin{equation}\label{dtheta}
\T'_1\big(\sqrt{u^2+u^{4k+2}}\big)=\dfrac{2 k\, u^{2k+1}}{(1+(1+2k)u^{4k})\sqrt{1+u^{4k}}}.
\end{equation}
Therefore,
\begin{equation}\label{eq:f2star}
\begin{aligned}
f_2^*(r)  =&-\dfrac{r}{2}\T_1'(r)\sin 2\T_1(r)\int_{\T_1(r)}^{\T_2(r)} \Big( r\,h_m'(r \cos \T )\sin ^2\T  -  h_m(r \cos \T )\cos \T\Big)\,d\T\\
&-\beta\dfrac{\pi r^2}{4}\T_1'(r)\sin 2\T_1(r)+\CO(|\Lambda|^2).
\end{aligned}
\end{equation}

From \eqref{eq:f2} and \eqref{eq:f2star}, one has
\[
M_2(r)=M_{21}(r)+M_{22}(r)+\CO(|\Lambda|^2),
\]
where
\[
M_{21}(r)=2\al\sin\T_1(r)-\beta\dfrac{\pi r}{4}\big(1+r\T_1'(r)\sin 2\T_1(r)\big)-\gamma\dfrac{\pi\,r}{2}.
\]
and
\[
\begin{aligned}
M_{22}(r)=&\int_{\T_1(r)}^{\T_2(r)}\left[\dfrac{1}{2}\cos\T\,\big(r\,\T_1'(r)\sin2\T_1(r)-2\cos2\T\big)h_m(r\cos\T)\right.\\
&+\dfrac{r}{8}\Big(1-3\cos4\T+2\cos 2\T\,\cos2\T_1(r)-4r\,\T_1'(r)\sin^2\T\,\sin 2\T_1(r)\Big)h_m'(r \cos \T )\\
&\left.+\dfrac{r^2}{4}\cos\T\,\sin^2\T\,\left(\cos 2\T-\cos2\T_1(r)\right) \,h_m''(r \cos \T )\right]\,d\T\\
&-\int_{\T_1(r)}^{\T_2(r)}\left[\cos\T\sin\T\int_{0}^{\T} \Big(r\,h_m'(r \cos \phi )\sin ^2\phi - h_m(r \cos \phi )\cos \phi\Big)\, d\phi\right]\,d\T.\\
\end{aligned}
\]

By substituting the relationships \eqref{eq:id} and \eqref{dtheta} into $M_{21}(r)$, one obtain that
\begin{equation}\label{eq:M21}
M_{21}\big(\sqrt{u^2+u^{4k+2}}\big)=\dfrac{\al\,f_1(u)+\beta\,f_2(u)+\gamma\,f_3(u)}{4(1+(1+2k)u^{4k})\sqrt{1+u^{4k}}},
\end{equation}
where $f_1,f_2$ and $f_3$ are given by \eqref{eq:func}.

Now, in order to compute $M_{22}(r)$, recall that $h_m(x;c)=c_1 x^{2 p_1}+c_2 x^{2p_2}+\cdots+c_m x^{2p_m}$. Thus,
\[
M_{22}(r)=c_1\mu_1(r)+c_2\mu_2(r)+\cdots+c_m\mu_m(r),
\]
where
\[
\begin{aligned}
\mu_i(r)=&r^{2p_i}\int_{\T_1(r)}^{\T_2(r)}\left[\dfrac{1}{2}\big(r\,\T_1'(r)\sin2\T_1(r)-2\cos2\T\big)\cos^{2p_i+1}\T \right.\\
&+p_i\dfrac{1}{4}\Big(1-3\cos4\T+2\cos 2\T\,\cos2\T_1(r)-4r\,\T_1'(r)\sin^2\T\,\sin 2\T_1(r)\Big)\cos^{2p_i-1}\T\\
&\left.+p_i(2p_i-1)\dfrac{1}{2}\cos\T\,\sin^2\T\,\left(\cos 2\T-\cos2\T_1(r)\right) \cos^{2p_i-2}\T\right]\,d\T\\
&+r^{2p_i}\int_{\T_1(r)}^{\T_2(r)}\left[\cos\T\sin\T\int_{0}^{\T} \cos^{2pi-1}\phi\,\big(\cos^2\phi-2p_i\sin^2\phi\big)\, d\phi\right]\,d\T.\\
\end{aligned}
\]
Notice that
\[
\int_{0}^{\T} \cos^{2pi-1}\phi\,\big(\cos^2\phi-2p_i\sin^2\phi\big)\, d\phi=\cos^{2p_i}\sin\phi,
\]
for every integer $p_i\geq1$. Thus, $\mu_i$ writes
\[
\begin{aligned}
\mu_i(r)=&\dfrac{r^{2p_i}}{4}\big(p_i\cos2\T_1(r)+r\T_1'(r)\,\sin 2\T_1(r)\big)\int_{\T_1(r)}^{\T_2(r)}\cos^{2p_i-1}\T\big(1-2p_i+(1+2p_i)\cos 2\T\big)\,d\T\\
&+\dfrac{r^{2p_i}}{8}\int_{\T_1(r)}^{\T_2(r)}\cos^{2p_i-1}\T\big((3-2p_i)-1+(4p_i^2-2p_i-4)\cos 2\T-(1+p_i)(3+2p_i)\cos 4\T\big)\,d\T.
\end{aligned}
\]
The integrals above can be computed directly for any integer $p_i\geq 1$, providing
\[
\mu_i(r)=r^{2p_i}2\cos^{2p_i+1}\T_1(r)\,\sin \T_1(r)\big(\cos\T_1(r)-r\,\T_1'(r)\,\sin\T_1(r)\big).
\]

Finally, substituting \eqref{eq:id} and \eqref{dtheta} into $\mu_i$, it follows that
\[
\mu_i\big(\sqrt{u^2+u^{4k+2}}\big)=\dfrac{8u^{2(k+p_i)}}{4(1+(1+2k)u^{4k})\sqrt{1+u^{4k}}},
\]
which implies that
\[
M_{22}(\sqrt{u^2+u^{4k+2}}\big)=\dfrac{2}{4(1+(1+2k)u^{4k})\sqrt{1+u^{4k}}}\big(c_1 g_1(u)+\cdots+c_m g_m(u) \big),
\]
where $g_i(u)$, for $i\in\{1,\ldots,m\}$ are given in \eqref{eq:func}. Therefore,
\[
M_2\big(\sqrt{u^2+u^{4k+2}}\big)=\dfrac{P(u;\Lambda)}{Q(u)},
\]
where
\[
Q(u)=4(1+(1+2k)u^{4k})\sqrt{1+u^{4k}}
\]
and
\[
P(u;\Lambda)=\al\,f_1(u)+\beta\,f_2(u)+\gamma\,f_3(u)+\sum_{i=1}^m c_i g_1(u)+\CO(|\Lambda|^2).
\]

\section*{Acknowledgements}

DDN is partially supported by S\~{a}o Paulo Research Foundation (FAPESP) grants 2018/16430-8, 2018/ 13481-0, and 2019/10269-3, and by Conselho Nacional de Desenvolvimento Cient\'{i}fico e Tecnol\'{o}gico (CNPq) grants 306649/2018-7 and  438975/2018-9.

\bibliographystyle{abbrv}
\bibliography{references.bib}

\end{document}